\documentclass[a4paper]{amsart}

\usepackage{comment}
\usepackage{graphicx}
\usepackage{amsmath,amsthm,amssymb,bbm}
\usepackage[all]{xy}
\usepackage{tikz-cd}

\DeclareMathOperator{\Ker}{Ker}                  
\renewcommand{\phi}{\varphi}                  
\newcommand{\eps}{\varepsilon}                  

\newcommand{\ind}{\mathbbm{1}}
\newcommand{\coker}{\text{\rm coker}\,}

\newcommand{\R}{\mathbb{R}}
\newcommand{\Z}{\mathbb{Z}}

\newcommand{\N}{\mathbb{N}}

\newtheorem{Theorem}{Theorem}

\newtheorem{Proposition}[Theorem]{Proposition}

\newtheorem{Example}[Theorem]{Example}

\newtheorem{Definition}[Theorem]{Definition}

\begin{document}

\title{$K$-theory of the Chair Tiling via $AF$-algebras}

\author[A.\ Julien]{Antoine Julien}
\address{Department of Mathematical Sciences, NTNU, Trondheim, Norway}
\email{antoine.julien@math.ntnu.no}

\author[J.\ Savinien]{Jean Savinien}
\address{Universit\'e de Lorraine, Institut Elie Cartan de Lorraine, UMR 7502, Metz, F-57045, France}
\address{CNRS, Institut Elie Cartan de Lorraine, UMR 7502, Metz, F-57045, France.}
\email{jean.savinien@univ-lorraine.fr}

\begin{abstract}
{We compute the $K$-theory groups of the groupoid C$^\ast$-algebra of the chair tiling, using a new method.
We use exact sequences of Putnam to compute these groups from the $K$-theory groups of the $AF$-algebras of the substitution and the induced lower dimensional substitutions on edges and vertices.}
\end{abstract}

\keywords{aperiodic tiling; $K$-theory; $C^*$-algebra; groupoid}

\maketitle

\section{Introduction}

A repetitive, aperiodic tiling (or point-set) exhibits two \emph{a priori} antagonistic behaviours. On the one hand, local configurations of any finite size repeat; on the other hand, the way in which they repeat is not predictable, in the sense that it can't be described by a periodic lattice. The study of such repetitive, aperiodic tilings is the basis of the theory of \emph{aperiodic order}.
This theory gained a lot of traction in the 1980's, when it appeared that such objects--such as the Penrose tilings--could be used to model quasicrystals in nature (see~\cite{BG13} for a recent treatment).

A $\R^d$-dynamical system $(\Omega, \R^d)$ can be naturally associated with a given aperiodic tiling $T$: it is a compact space containing all tilings which ``look locally like $T$''. A close analogue in the symbolic setting is the $\Z^d$-subshift associated with a given word in $\{0,1\}^{\Z^d}$.
The tiling space $\Omega$ is then the perfect analogue of the \emph{suspension} of the subshift.

It is very relevant, especially in the context of crystallography, to study the $C^*$-algebra $C(\Omega) \rtimes \R^d$, associated with the groupoid $\Omega \rtimes \R^d$ of $\R^d$ acting on $\Omega$.\footnote{Often, it is technically simpler to study a Morita-equivalent $C^*$-algebra corresponding to the reduction of $\Omega \rtimes \R^d$ on a transversal.}
In particular, the ordered $K_0$-group of this $C^*$-algebra can be related to the gaps in the spectrum of a Schr\"odinger operator describing the motion of electrons on a quasicrystal~\cite{BHZ00}.
There have been many approaches for computing topological invariants of tiling spaces and their $C^*$-algebras, and we very partially decide to cite Kellendonk~\cite{Kel97} (one of the first approaches, relating $K$-theory to the group of coinvariants), Moustafa~\cite{Mou10} ($K$-theory computations for the Pinwheel tiling, involving the explicit construction of fiber bundles representing $K$-elements), and Oyono-Oyono--Petite~\cite{OOP11} (very sophisticated computations involving the $K$-theory of the hyperbolic Penrose tiling).
Beyond particular examples, we can single out two important families of aperiodic, repetitive tilings: self-similar tilings and cut-and-project tilings (also known as model sets). For each of these, techniques have been developed to compute topological invariants~\cite{AP98,GHK13}.
However, these methods rely in general on the Thom--Connes isomorphism $K_*(C(\Omega) \rtimes \R^d) \simeq K_*(C(\Omega)) \simeq K^*(\Omega)$ (with a possible grading shift depending on the parity of $d$), and on the Chern isomorphism
\[
 K^* (\Omega) \otimes \mathbb Q = \bigoplus_i \check H^i (\Omega) \otimes \mathbb Q,
\]
with appropriate grading depending on the parity of $i$.
In any case, the computation of $K$ is often reduced to the computation of the cohomology groups of $\Omega$ (in one or another form: there are \emph{a lot} of ways to describe the cohomology of tiling spaces).

A very notable exception is the case of ``tilings'' of dimension one. Assume (without loss of generality) that the one dimensional tiling space is the suspension of an aperiodic, minimal subshift $(X,\sigma)$. The $C^*$-algebra of relevance is, in this case, $C(X) \rtimes_\sigma \Z$ (up to Morita-equivalence). It was established by Putnam that
\[
 K_0 (C(X) \rtimes \Z) \simeq K_0 \bigl( \langle C(X), C_0(X\setminus\{y\}) u \rangle \bigr),
\]
for some $y\in X$, and where $u$ is the unitary in $C(X) \rtimes \Z$ implementing the action. This is especially interesting, because the right-hand algebra is an $AF$-algebra, and its $K_0$-group can be computed by an appropriate direct limit.
The philosophy in this case is that the tiling algebra contains a large $AF$-subalgebra, and if this $AF$-algebra is large enough, their $K_0$-groups are isomorphic.

In higher dimensions, such an approach cannot succeed. For dimension two and higher, there is no such thing as a ``maximal $AF$-subalgebra'' in a tiling algebra.
However, if the tiling algebra contains a big enough subalgebra, we can hope: 1) that the $K$-groups of the subalgebra are easier to compute; and 2) that it is possible to compute the difference between the $K$-groups of these two-algebras.
The basic step of such an approach was presented by Putnam in a series of two papers~\cite{Put97, Put98}. These papers compare the $K$-theory of a groupoid algebra and of a ``disconnected'' subgroupoid algebra.

In the present paper, we present a proof-of-concept that these methods can actually be used to compute $K$-groups of tiling algebras.
This has several advantages. First, it is aesthetically pleasing to compute the $K$-groups of tiling algebras without leaving the non-commutative setting.
Second, it makes it easier to identify the generators of these $K$-groups. This can be especially important for working out the order-structure, or the image under a trace of these generators. It is known (\emph{gap-labeling theorem}) that the image under the trace of the $K_0$-group of a tiling algebra is equal to the module of frequencies.
However, if $\omega$ is a $S^1$-valued groupoid $2$-cocycle on $\Omega \rtimes \R^d$, the image under the trace of the $K_0$-elements of the \emph{twisted} $C^*$-algebra $C(\Omega) \rtimes^\omega \R^d$ can be more complicated.
In this case, the computation of the gap labeling group will rely on a precise identification of the generators of $K_0$.
Such twisted algebra appear naturally when describing the quantum Hall effect on aperiodic solids~\cite{BESB94} (see also more recently~\cite{BM15}).
Besides, the problem of computing the image under the trace of the $K_0$-group in the twisted setting also appeared recently in time-frequency analysis for describing Gabor systems associated with quasiperiodic point-sets~\cite{Kre16}.

Heuristically, our computations of the $K$-theory groups of the chair tiling appears to be guided by a filtration of the groupoid $\Omega \rtimes \R^d$  (or rather of its classifying space $\Omega$).
In the commutative case, Barge--Diamond--Hunton--Sadun~\cite{BDHS10} have produced a machinery to compute the cohomology of a tiling space.
They compute the cohomology of $\Omega$ with exact sequences arising from a filtration based on the geometry of the tiles (vertices, edges, and faces), and use relative cohomology theory.
The computations presented in the present paper can be viewed as a non commutative analogue of their approach for $K$-theory.

Finally, the approach presented here leads to an interesting question: it appears that the $K$-invariants of the tiling $C^*$-algebra can be recovered entirely by computing dimension groups (i.e.\@ $K_0$-groups of $AF$-algebras) and connecting maps. Previously, the authors investigated how a tiling groupoid (which can be seen as an equivalence relation if the tiling has no period) can be described entirely by reading tail-equivalence relations from Bratteli diagrams, together with additional ``adjacency'' information~\cite{BJS12,JS12}.
The next question is whether the same approach can provide some new insight on the structure of the algebras themselves.
Is there a way to describe a tiling algebra (or a Cantor minimal $\Z^d$-system---of which tiling algebras are tractable examples) as being built from $AF$-algebras ``glued'' together in an appropriate way?

\medskip

The paper is organized as follows.
In Section~\ref{sec-Setup} we present our setup: define the chair substitution, the decorations and the boundary tilings, and spell out the various inverse limits describing the tiling space $\Omega$ and the space of boundary tilings.
In Section~\ref{sec-def} we define the groupoids we will use: the groupoid~$G$ of the chair tiling, the groupoids of Putnam (in particular those associated with boundary tilings), and the $AF$-groupoids.
We describe Putnam's exact sequences relating the $K$-theory of these various groupoids in Section~\ref{sec-PutES}, and spell out the $K$-theory maps explicitly.
We tackle the computations in Section~\ref{sec-Kth}: first we compute the $K_0$-groups of the $AF$-groupoids and give the explicit generators, and next compute the exact sequences.
We find:
\[
K_0 \bigl( C^\ast(G) \bigr)  \simeq \Z \bigl[ \frac{1}{4}\bigr] \oplus \Z \bigl[ \frac{1}{2}\bigr]^2 \oplus \Z,
\qquad 
K_1 \bigl( C^\ast(G) \bigr)  \simeq \Z \bigl[ \frac{1}{2}\bigr]^2.
\]
We used at several places the assistance of a computer algebra software. A more thorough explanation of some of the steps is given in appendix of the current version of this paper (but is not included in the published version). None of the computation uses specialized libraries: only basic functions such as eigenvalues, eigenvectors and Schmidt normal form are needed.

\section{Setup}
\label{sec-Setup}

\subsection{The chair substitution}

Consider the substitution rule $\omega_0$ given by the left-hand side of Figure~\ref{fig:subst-undecorated}. Given a set of prototiles, a substitution on this set is a process which inflates the support of a tile and covers it by copies of the prototiles.
In this example, we have four prototiles, one for each orientation. This substitution is often called the ``chair'' substitution, because it is equivalent (up to recoding) to the substitution pictured in Figure~\ref{fig:subst-undecorated} (right).
The substitution $\omega_0$ is defined on tiles, but it can be defined on finite or infinite sets of tiles by concatenation.

\begin{figure}[htp]
\begin{center}
\includegraphics[scale=0.65]{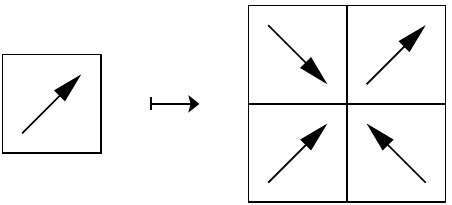} \qquad \includegraphics[scale=0.8]{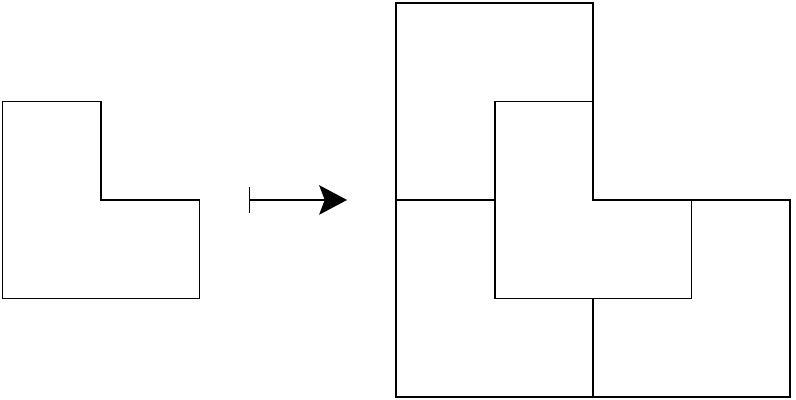}
\caption{{\small The ``arrow-chair'' substitution (left) and the original chair substitution. The substitution is shown on one orientation of tiles; the substitution of the other tiles is obtained by rotation.}}
\label{fig:subst-undecorated}
\end{center}
\end{figure}

A substitution tiling associated with this rule is a set of tiles $T$, which are all translates of the four prototiles, whose union covers $\R^2$, and such that:
\begin{itemize}
 \item for all finite configuration of tiles $P \subset T$ (we call $P$ a \emph{patch} of $T$), there exists a tile $t$ such that $P$ is a patch of $\omega_0^n(t)$ for some $n$ big enough.
\end{itemize}
A typical patch is pictured in Figure~\ref{fig:chair-patch}.

\begin{figure}[htp]
\begin{center}
\includegraphics[scale=0.8]{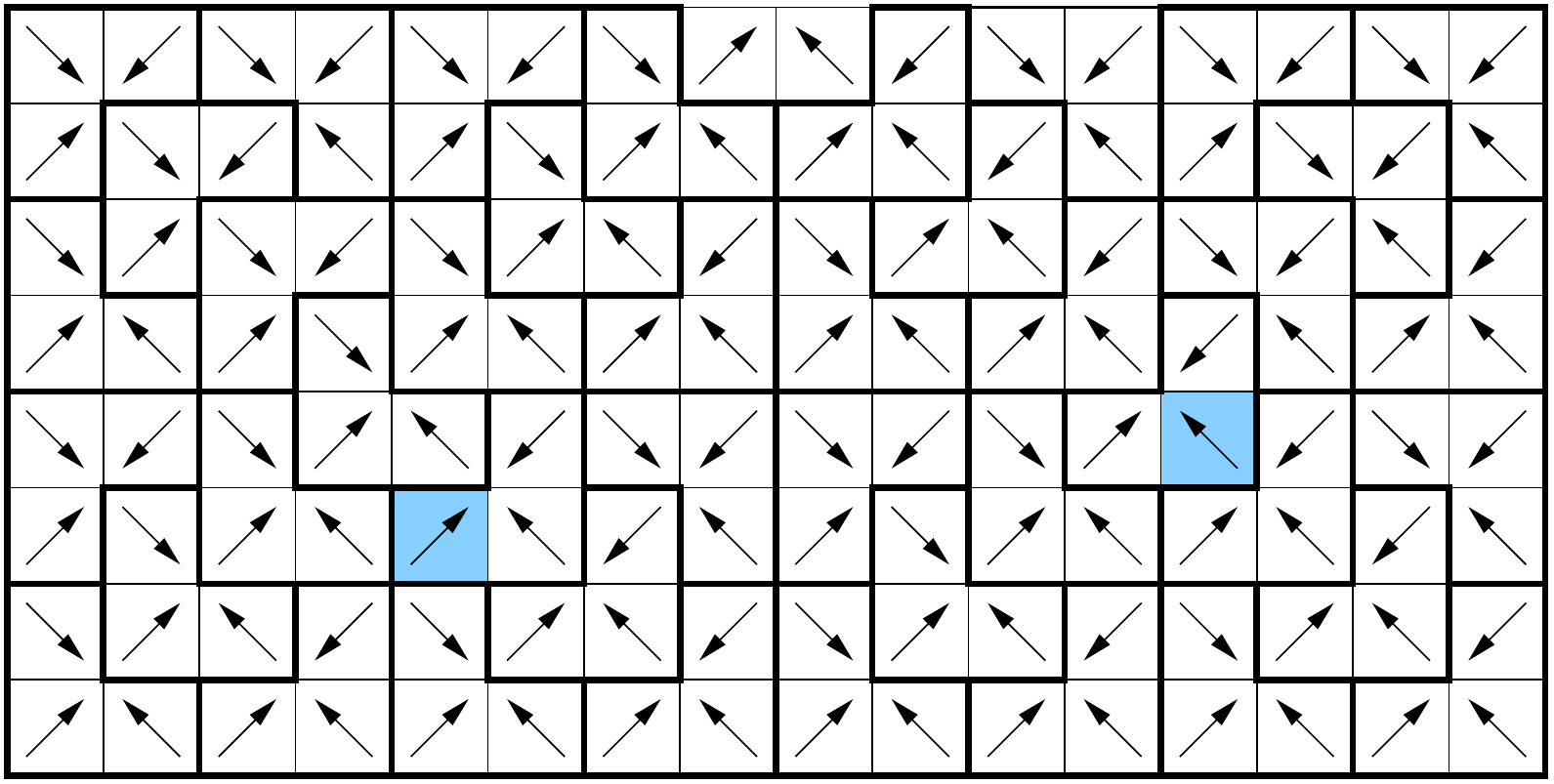}
\caption{{\small A patch of the chair tiling. Notice how arrowed tiles can be bundled up consistently into chairs. Conversely, the rule partitioning chairs into arrowed squares is well defined.
The left-hand side marked tile corresponds to a decorated tile of type $D_2$, and the right-hand side one corresponds to a decorated tile of type $rA$, where $r$ is the rotation of angle $\pi/2$ (see below).}}
\label{fig:chair-patch}
\end{center}
\end{figure}

Let $\Omega$ be the space of all substitution tilings associated with $\omega_0$. By convention, tilings are not identified up to translation: $T$ and $T-x$ are in general two different tilings.
The space $\Omega$ comes with the topology given by the distance:
\[
 d(T,T') < \eps \quad \text{if} \quad T-x, \ T'-x' \text{ agree on } B(0,1/\eps), \text{ with } \| x\|, \|x'\| < \eps.
\]
It is well known that it is a compact space, on which $\R^2$ acts freely and minimally by translation.

Our goal in this paper is to provide a method to compute the $K$-theory of the $C^*$-algebra $C(\Omega) \rtimes \R^2$.
This example has been treated before, but to the best of our knowledge, all methods used to compute the $K$-theory of the noncommutative algebra boil down to computing the cohomology of the space $\Omega$. We propose here a method which is in essence noncommutative.

\subsection{Tiles, boundaries and decorations}

The chair substitution allows one to define the space in a fairly simple way, but it will be necessary to use a ``decorated substitution''. Figure~\ref{fig-tiles} describes a substitution on 24 tiles (6 are pictured, and the other are given by rotation), which we denote by $\omega$.
The added data (decoration) is pictured in red and consists of information on some of the neighbouring tiles. 
We denote $A, B$, etc.\@ the tiles up to rotation, and $A^0$, $B^0$, etc.\@ the tiles as pictured with the north-east orientation. We let $r$ represent the rotation of angle $\pi/2$, so that the tiles can be written $A^0$, $rC_2^0$, $r^2D_2^0$, etc.

\begin{figure}[htp]
\begin{center}
\includegraphics[scale=0.65]{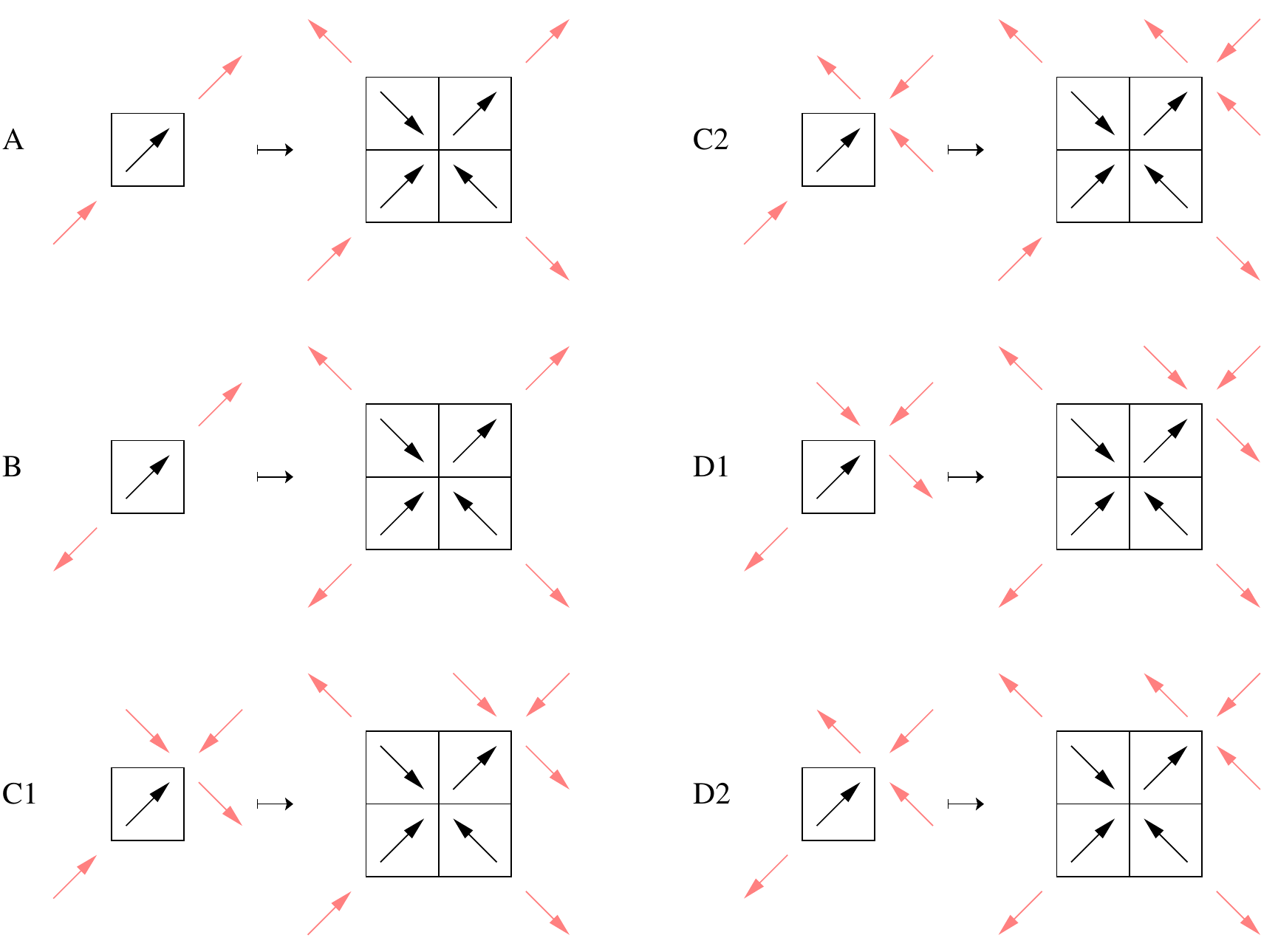}
\caption{{\small The 6 decorated tiles of the chair tiling (up to rotation), and how they substitute.}}
\label{fig-tiles}
\end{center}
\end{figure}

\begin{figure}[htp]
\begin{center}
\includegraphics[scale=0.65]{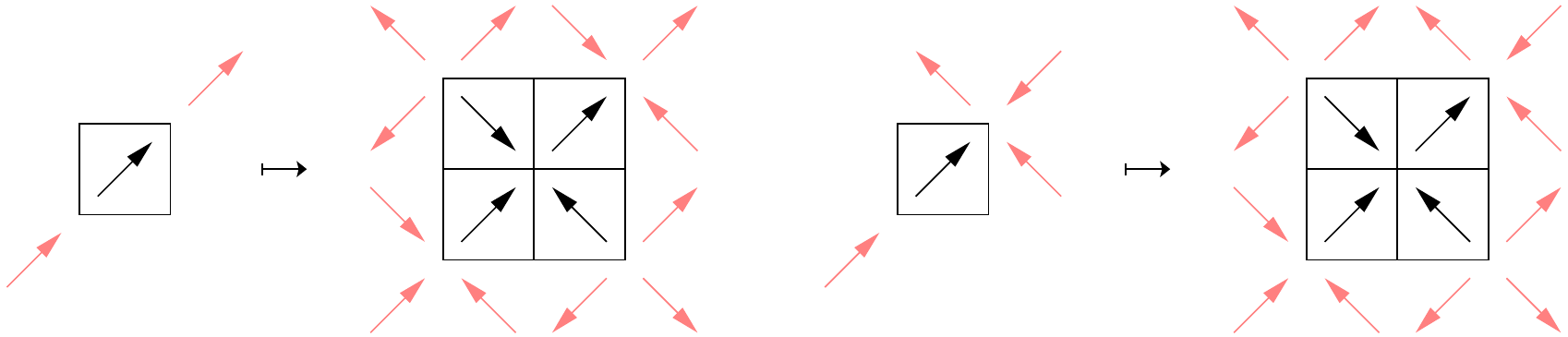}
\caption{{\small The decoration \emph{forces its border}: given a decorated tile $t$, there is no ambiguity on the type of (undecorated) tiles which surround $\omega(t)$.}}
\label{fig:border-forcing}
\end{center}
\end{figure}

It can be proved (see~\cite{Sad08book}) that this substitution gives the same space as the simpler chair substitution. More precisely, the map from the decorated tiling space to the non-decorated tiling space is just ``forgetting'' the additional labels.
The inverse map simply adds the information about the neighbouring tiles.

The essential feature of this decorated substitution is that it \emph{forces its border} in the sense that given a tiling $T \in \Omega$ and a (decorated) tile $t \in T$, all tiles adjacent to $\omega(t)$ in $\omega(T)$ are entirely determined by $t$ (and do not depend on the neighbouring tiles of $t$). See Figure~\ref{fig:border-forcing} for an example.

Next, we want to identify the edges of the tiles, and how they substitute.
For our purpose, an edge is a set of two tiles $e = \{t_1,t_2\}$ whose intersection has dimension one.
One can think of it as the segment $t_1 \cap t_2$ with an additional label consisting of the two tiles $t_1,t_2$. By abuse of terminology, we will also let $e$ denote $t_1 \cap t_2$ (or we will refer to this set as ``the support of $e$'' when more precision is needed).
The substitution on tiles induces a substitution on edges as follows.
Let $e=\{t_1, t_2\}$, and $P = \omega (\{t_1, t_2\})$. The substituted edge $\omega^{(1)} (e)$ is equal to $\{e'_1, e'_2\}$, where $e'_1 \cup e'_2$ is a sub-patch of $P$, and the support of the tiles $e'_i$ cover exactly the support of $\lambda e$ (here, $\lambda = 2$ is the inflation factor for this substitution).

\begin{figure}[htp]
\begin{center}
\includegraphics[scale=0.65]{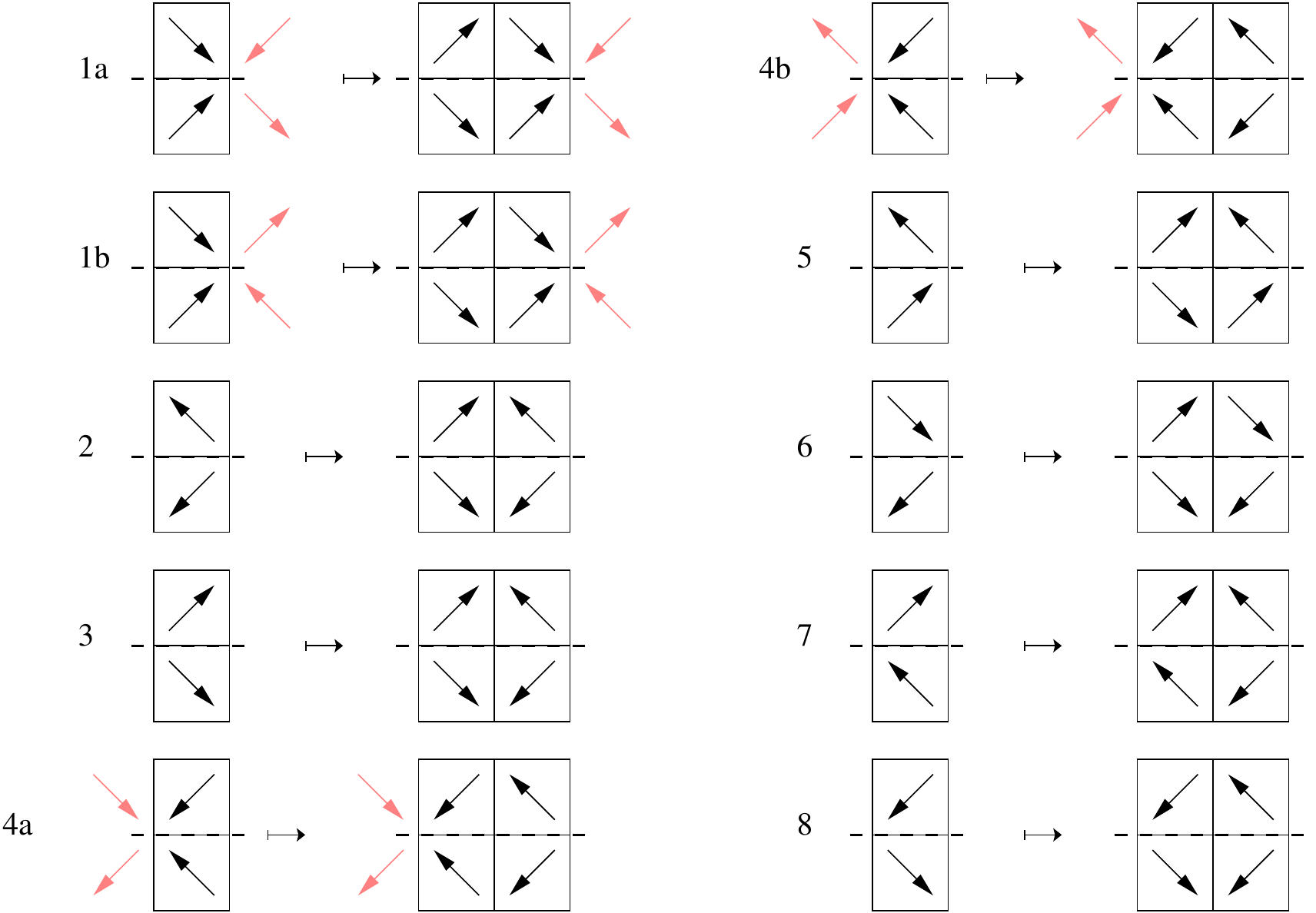}
\caption{{\small The 10 decorated horizontal edges of the chair tiling, and how they substitute. Note, for example, that the bottom tile of edge $1b$ (using additional information from the tile above and the two red tiles) could be of type either $A$ or $B$.}}
\label{fig-edges}
\end{center}
\end{figure}

We decorate edges in a similar way as the tiles, by adding information such that they satisfy the following border forcing condition: ``for all (decorated) edge $e \subset T$, the set of (undecorated) tiles which intersect the support of $\omega^{(1)}(e)$ is entirely determined by~$e$''.
This condition would be automatically satisfied if we chose edges to be pairs of decorated tiles.
However, since we want to keep the number of elements involved in the computations as low as possible, we first define edges to be pairs of undecorated tiles, and then add a decoration if needed.
As a consequence, a tile $t_1$ in an edge $\{t_1,t_2\}$ is less decorated than a decorated tile as defined above (``edges are less decorated than tiles'').
Figure~\ref{fig-edges} lists horizontal boundaries of the chair substitution, along with their images under $\omega^{(1)}$. Because of the symmetries, vertical boundaries are obtained by rotation.

Similarly, we want to identify the substitution induced on tile vertices. A vertex (or vertex star) for our purpose, is a set of four (undecorated) tiles intersecting at a point.
The induced substitution is defined as for edges, and automatically forces its border. For the purpose of $K$-theory computations, it is enough to consider vertex-stars which lie in the eventual range of the induces substitution. See Figure~\ref{fig-vertex-star}.

\begin{figure}[htp]
\begin{center}
\includegraphics[scale=0.65]{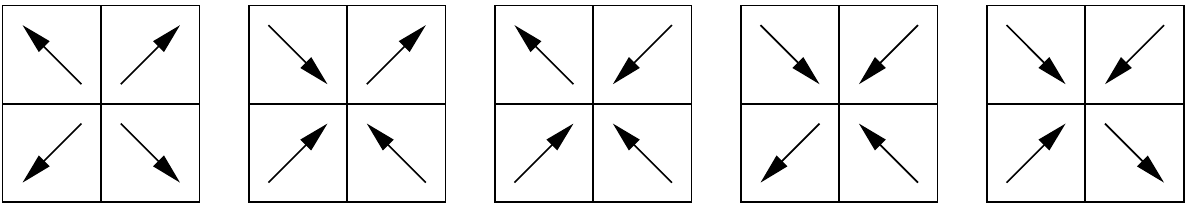}
\caption{{\small The vertex-stars of the substitution which lie in the eventual range. We label them respectively $v_0, \ldots v_5$. Any vertex star substitutes into one of these. They are fixed by the substitution.}}
\label{fig-vertex-star}
\end{center}
\end{figure}

The last step of our setup is to examine adjacency: an edge $e = \{t_1, t_2\}$ is adjacent to a decorated tile $t$ if there is a tiling $T \in \Omega$ such that $e \subset T$, $t-x \in T$ for some $x$, and the support of $t-x$ and $t_1$ (or $t_2)$ coincide.
For example, the edge of type $7$ as defined in Figure~\ref{fig-edges} can be glued at the bottom of tiles $A$, $C_1$ and $C_2$.
Similarly, we examine adjacency of edges and vertex stars.

\medskip

To summarize, we have:
\begin{itemize}
 \item A list of border-forcing substitutions in all dimensions, defined on decorated tiles, edges, vertex-stars. The Abelianization matrices are $B^{(k)}$ ($k=0, 1, 2$), and are defined by: $[B^{(k)}]_{i,j}$ is the number of occurences of tile (or edge, vertex) $i$ the substitution of tile (or edge, vertex) $j$.
 \item A list of adjacency relations between tiles, edges, vertices.
\end{itemize}

\subsection{Spaces and inverse limits}

Given an aperiodic substitution tiling space such as $\Omega$, it is known 
that the substitution $\omega$ acts on it by homeomorphism~\cite{Sol97}. In particular, it is invertible. As a consequence, for each tiling $T \in \Omega$ and each tile $t \in T$, there is a \emph{unique} tile $t' \in \omega^{-1}(T)$ such that $t \in \omega(t') \subset T$.
The patch $\omega(t')$ is called the \emph{super-tile} containing $t$.
This leads to the following definition:
\begin{Definition}\label{def:supertile}
 Given a tiling $T$, and $t \in T$, the $n$-th order supertile containing $t$ is the unique patch of the form $\omega^n(t')$ which contains $t$ and such that $t' \in \omega^{-n}(T)$. We may write ``a $n$-th order supertile of type $C_1$'', for example, when the tile $t'$ is a representative of the prototile $C_1$.
 An infinite-order supertile is an increasing union of $n$-th order supertiles as $n \rightarrow +\infty$.
\end{Definition}

\begin{Definition}\label{def:boundary}
 We say that $T$ is a \emph{boundary tiling} if $0$ belongs to two or more infinite order supertiles. In our case of tilings by squares, whenever $T$ is a boundary tiling, then either $T - (x,0)$ is a boundary tiling for all $x \in \R$, or $T-(0,x)$ is a boundary tiling, or both. In the first case, we say that $T$ has a horizontal boundary. In the second case we say that it has a vertical boundary.
\end{Definition}

It is easily checked that the set of boundary tilings is closed in $\Omega$.

Such aperiodic substitution spaces can be described as inverse limits~\cite{AP98}.
A CW-complex $\Gamma$ is defined as follows:
let $T$ be a tiling in $\Omega$. Instead of a set of tiles, we see it as a CW-decomposition of $\R^2$.
Let $\Gamma$ be the quotient of this CW-complex in which two $2$-cells (resp.\ $1$-cells or $0$-cells) are identified whenever they correspond to the same decorated tile (resp.\ edge or vertex-star).
A $2$-cell and a $1$-cell are adjacent in $\Gamma$ if and only if the corresponding tile and edge are adjacent in the sense of the previous section.

The substitution induces a continuous, cellular map on $\Gamma$.
Since the decoration was chosen in such a way that $\omega$ forces its border (see~\cite{AP98} and~\cite{Sad08book}) it results that
\[
 \Omega \simeq \varprojlim (\Gamma \leftarrow \Gamma \leftarrow \ldots),
\]
where $\simeq$ stands for ``is homeomorphic''.

Any tiling $T$ corresponds to a sequence of elements $(\gamma_1, \gamma_2, \ldots)$ in $\Gamma$. The map is defined in such a way that $x_i$ belongs to a tile of type $t$ in $\Gamma$ if and only if $0$ belongs to a tile of type $t$ in $\omega^{-i}(T)$, or equivalently if and only if $0$ belongs to a $n$-supertile of type $t$ in $T$.
It is easily seen that a tiling $T \in \Omega$ is:

\begin{itemize}
\item  a boundary tiling if and only if its representation in the inverse limit $(\gamma_1, \gamma_2, \ldots)$ satisfies that $\gamma_i$ belongs to the $1$-skeleton for all $i$; 
\item both a vertical and horizontal boundary tiling if and only if the $\gamma_i$'s belong to the $0$-skeleton.
\end{itemize}

The CW-complex $\Gamma$ can be filtered by its $0$- and $1$-skeletons. We define $\Omega^{(1)}:= \varprojlim (\Gamma^{(1)} \leftarrow \ldots)$ the inverse limit of the $1$-skeletons. Note that this $1$-skeleton consists of edges of prototiles.
Similarly, we define $\Omega^{(0)}$ as the inverse limit of the $0$-skeletons.
We can be a bit finer and define $\Omega^{(1)}_h$ and $\Omega^{(1)}_v$ the inverse limits of the part of the $1$-skeleton which correspond to horizontal (resp.\ vertical) edges of tiles. These two sets intersect exactly at $\Omega^{(0)}$ and their union is $\Omega^{(1)}$.
They are the sets of boundary tilings (resp.\ horizontal, vertical boundary tilings).

\begin{Definition}\label{def:transversal}
 The \emph{canonical transversal} of $\Omega$ is the set $\Xi$ of all tilings $T \in \Omega$ such that $0$ lies at the barycenter of a tile.
 Given the geometry of the tiles, the $\R^2$-action restricts to a $\Z^2$ action.
 We define similarly the set $\Xi^{(1)}$ as the set of all tilings in $\Omega^{(1)}$ which have the center of an edge at the origin.
 We define $\Xi^{(1)}_v$ and $\Xi^{(1)}_h$ as the analogous sets for $\Omega^{(1)}_v$ and $\Omega^{(1)}_h$. Notice how these last two sets carry a $\Z$-action by vertical (resp.\ horizontal) translation.
\end{Definition}

\section{Definitions of the groupoids}
\label{sec-def}

 We define here the groupoids associated with the tiling space.
 Given a tiling space $\Omega$, the action of $\R^2$ allows to define a continuous groupoid, noted $\Omega \rtimes \R^2$.
 It consists of pairs $(T,x)$ with partially defined product:
 \[
  (T,x) \cdot (T',y) = (T',x+y) \qquad \text{if } T'-y = T.
 \]
 It also comes with a range and source map respectively defined by $r(T,x) = T-x$ and $s(T,x) = T$. The topology is the one induced by $\Omega \times \R^2$.
 This groupoid however is not the most tractable one.
 Let $G$ be the groupoid of the transversal, which is the reduction of $\Omega \rtimes \R^2$ to the transversal $\Xi$: $G = s^{-1}(\Xi) \cap r^{-1} (\Xi) \subset \Omega \rtimes \R^2$.
 This groupoid is \'etale, meaning that $s$ and $r$ are local homeomorphisms. In particular, the orbits (set of the form $r^{-1} (T,x)$) are discrete, and the counting measure defines a Haar system. There is therefore a well-defined reduced $C^*$-algebra associated with them.
 In our case, $G$ identifies with the groupoid $\Xi \rtimes \Z^2$ given by the action of $\Z^2$ on $\Xi$.
 The subset $\Xi$ is an abstract transversal in the sense of Muhly--Renault--Williams~\cite{MRW87}, and therefore $\Omega \rtimes \R^d$ and $G$ are Morita-equivalent.
 
 Similarly, we define $G^{(1)}_h$ and $G^{(1)}_v$ respectively as the reductions of $\Omega \rtimes \R^2$ to $\Xi^{(1)}_h$ and $\Xi^{(1)}_v$.
 Note that these ``transversals'' are abstract transversals of $\Omega^{(1)}_h$ and $\Omega^{(1)}_v$ respectively, but are \emph{not} transversals of $\Omega$, as they don't intersect every orbit.
 Therfore, $G^{(1)}_h$ and $G^{(1)}_v$  are not Morita-equivalent to $\Omega \rtimes \R^2$ or $G$.
 Note however that they are \'etale (they can be described as being given by the $\Z$-action on $\Xi_v^{(1)}$ or $\Xi_h^{(1)}$ by vertical or horizontal translations respectively).
 As an irritating remark, the reduction of $\Omega \rtimes \R^2$ to $\Xi^{(1)}$ is unfortunately not \'etale.
Putnam's constructions, described in the next section, allow one to go around this difficulty.

\subsection{Putnam's disconnections and residual groupoids}
\label{ssec-decres}

We now describe how the tiling groupoid can be decomposed into a disconnected groupoid and a residual groupoid.
Consider a discrete tiling space $\Xi$ of a substitution tiling of the plane.
Given a subspace of boundary tilings (for example $\Omega^{(1)}_h$), define the subspace of $\Xi$ which consists of all tilings which contain a boundary (not necessarily at the origin):
\[
 \Xi_h := \Xi \cap \{T-x \ ; \ T \in \Omega^{(1)}_h, \ x \in \R^2 \}.
\]
As an algebraic object, let $G^\mathrm{res}_h$ be the reduction on $\Xi_h$ of $G$. We define a topology on this groupoid as follows:
given a sequence $(T_n,x_n)$ in $G^\mathrm{res}_h$, we say that it converges to $(T,x)$ if $T_n$ can be written as $T^{(1)}_n + y_n$ with $T^{(1)}_n \in \Omega^{(1)}_h$, $y_n \in \R^2$, such that $T^{(1)}_n \rightarrow T^{(1)} \in \Omega^{(1)}$, $y_n \rightarrow y$, $x_n \rightarrow x$ and $T^{(1)}-y = T$.
This topology turns it into an \'etale groupoid.

We write $\Xi_h = \Xi_h^+ \cup \Xi_h^-$ where $\Xi_h^+$ consists of all tilings of $\Xi_h$ of the form $T-(x_1,x_2)$ with $T \in \Omega^{(1)}_h$ and $x_2 > 0$ (tilings for which the origin is above the boundary). $\Xi_h^-$ is defined as the complement.
Similarly (and by convention), denote $\Xi_v^+$ the tilings on the left of the vertical boundary and $\Xi_v^-$ those on the right.

Let $G$ be the groupoid of $\Xi$.
We define the \emph{disconnection of $G$ along the horizontal boundary} to be the groupoid obtained from $G$ by removing all arrows wich cross the horizontal boundary:
\[
G^{\mathrm{dec}}_h = \bigl\{\gamma \in G \ ; \  s(\gamma) \in \Xi_h^{\pm} \Rightarrow r(\gamma) \in \Xi_h^\pm  \bigr\},
\]
with the topology induced from the topology of $G$.
It consists of all arrows of $G$ which don't cross a horizontal boundary line.
Clearly $G^{dec}_h$ and $G$ have the same orbits, except for the orbits of the boundary tilings.

\begin{Proposition}
\label{prop-morita}
The groupoid $G^{\mathrm{res}}_h$ is Morita equivalent to the groupoid of the $\Z$-dynamical system $G^{(1)}_h = \Xi^{(1)}_h \rtimes \Z$.
\end{Proposition}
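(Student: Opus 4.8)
The strategy I would follow is to realise both $G^{\mathrm{res}}_h$ and $G^{(1)}_h$ as groupoids equivalent, in the sense of Muhly--Renault--Williams~\cite{MRW87}, to a single ``model'' groupoid, and then invoke transitivity of Morita equivalence. The model is the transformation groupoid $\mathcal G := \Omega^{(1)}_h \rtimes \R$, where $\R$ acts by \emph{horizontal} translation; this action is well defined because a horizontal translate of a horizontal boundary tiling is again one (Definition~\ref{def:boundary}), and $\mathcal G$ is a locally compact Hausdorff groupoid with the Haar system given by Lebesgue measure on $\R$. One checks directly that $\Xi^{(1)}_h \subseteq \Omega^{(1)}_h$ is an abstract transversal of $\mathcal G$: it meets every orbit (a suitable horizontal translation moves the origin onto the centre of a horizontal edge lying on the fault line), the map $(T^{(1)},a) \mapsto T^{(1)} - (a,0)$ from $\Xi^{(1)}_h \times \R$ to $\Omega^{(1)}_h$ is an open local homeomorphism, and the reduction $\mathcal G|_{\Xi^{(1)}_h}$ consists exactly of those $(T^{(1)},(a,0))$ with both endpoints in $\Xi^{(1)}_h$, which forces $a$ to be an integer multiple of the edge length. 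Hence $\mathcal G|_{\Xi^{(1)}_h} = \Xi^{(1)}_h \rtimes \Z = G^{(1)}_h$, so $G^{(1)}_h \sim_{M} \mathcal G$.

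The second, and main, step is an equivalence between $G^{\mathrm{res}}_h$ and $\mathcal G$, implemented by the linking space
\[
 W := \bigl\{ (T^{(1)},x) \in \Omega^{(1)}_h \times \R^2 \ ; \ T^{(1)} - x \in \Xi \bigr\},
\]
a closed subspace of $\Omega^{(1)}_h \times \R^2$, hence locally compact Hausdorff. The groupoid $\mathcal G$ acts on the left by $a\cdot(T^{(1)},x) = (T^{(1)} - (a,0),\, x - (a,0))$, with anchor $\rho(T^{(1)},x) = T^{(1)}$, and $G^{\mathrm{res}}_h$ acts on the right by $(T^{(1)},x)\cdot(T^{(1)}-x,y) = (T^{(1)}, x+y)$, with anchor $\sigma(T^{(1)},x) = T^{(1)} - x$. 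The two actions commute; both are free by aperiodicity, and both are proper because the $\R^2$-coordinate is tracked, so the relevant translation parameters stay bounded on compact sets. The map $\rho$ identifies $W/G^{\mathrm{res}}_h$ with $\Omega^{(1)}_h = \mathcal G^{(0)}$, and $\sigma$ identifies $W/\R$ with $\Xi_h = (G^{\mathrm{res}}_h)^{(0)}$; for the injectivity of the latter map one uses that a horizontal boundary tiling has a \emph{unique} horizontal fault line, so that two lifts $T^{(1)}_1, T^{(1)}_2 \in \Omega^{(1)}_h$ of the same $T \in \Xi_h$ differ by a purely horizontal translation (two parallel horizontal fault lines would bound an infinite-order supertile of finite height, which is impossible since $n$-th order supertiles have diameter $\to\infty$). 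Granting these identifications, the Muhly--Renault--Williams equivalence theorem yields $G^{\mathrm{res}}_h \sim_M \mathcal G$, and composing with the first step gives $G^{\mathrm{res}}_h \sim_M G^{(1)}_h$.

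The step that requires genuine care is the second identification above: one must verify that the quotient topology on $W/\R$ coincides with the topology carried by $\Xi_h$ in the definition of $G^{\mathrm{res}}_h$ --- the one for which convergence of $(T_n,x_n)$ is tested by lifting the $T_n$ to $\Omega^{(1)}_h$. In fact this somewhat \emph{ad hoc} topology is exactly the quotient topology transported along $\sigma$, so the verification ultimately amounts to unwinding the definitions; but this is precisely the point at which the residual groupoid has to be reconciled with a structural description, and it is the part I expect to be the main obstacle. The only properly tiling-theoretic ingredient is the uniqueness of the horizontal fault line, which follows from the structure of infinite-order supertiles.
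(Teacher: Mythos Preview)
Your argument is correct, but it takes a somewhat different route from the paper's. The paper builds a single ambient $\R^2$-groupoid: its unit space is the $\R^2$-orbit of $\Omega^{(1)}_h$, equipped with the topology in which $T_n \to T$ means $T_n = T'_n - y_n$ with $T'_n \to T'$ in $\Omega^{(1)}_h$ and $y_n \to y$ in $\R^2$. Then both $\Xi_h$ and $\Xi^{(1)}_h$ are closed abstract transversals of this one groupoid, and the reductions are precisely $G^{\mathrm{res}}_h$ and $G^{(1)}_h$. Morita equivalence follows in one stroke from Muhly--Renault--Williams.

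Your model $\mathcal G = \Omega^{(1)}_h \rtimes \R$ is in fact another reduction of the paper's ambient groupoid (to the transversal $\Omega^{(1)}_h$ itself), so your two-step argument amounts to factoring the paper's single equivalence through this intermediate object. Where the paper is terse, you are explicit: you spell out the linking bimodule $W$, and you isolate the tiling-theoretic input---uniqueness of the horizontal fault line---which the paper uses tacitly when asserting that $\Xi_h$ is closed in its ambient unit space and that the reductions behave as claimed. Your identification of the quotient topology on $W/\R$ with the \emph{ad hoc} topology on $\Xi_h$ is exactly the content hidden in the paper's sentence ``in particular notice that they are closed for this topology''. So the approaches differ mainly in packaging: the paper's is shorter, yours makes the moving parts visible.
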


\begin{proof}
 Define a groupoid consisting of all pairs $(T,x)$ where $T$ is in the $\R^2$-orbit of an element of $\Omega^{(1)}_h$ and $x \in \R^2$. Define convergence similarly as for $G^\mathrm{res}_h$: $(T_n,x_n)$ converges if $x_n$ converges and $T_n$ can be written as $T'_n-y_n$ with $y_n$ converging in $\R^2$ and $T'_n$ converging in $\Omega^{(1)}_h$.
 Then $\Xi_h$ and $\Xi^{(1)}_h$ are both transversals in the sense of Muhly Renault and Williams (in particular notice that they are closed for this topology). Therefore, $G^\mathrm{res}_h$ and $G^{(1)}_h$ are reductions of the same groupoid on two transversals---hence they are Morita equivalent.
\end{proof}

\medskip

The goal is to iterate several disconnections in a row.
First we disconnect $G$ along {\em horizontal} (resp.\@ \emph{vertical}) boundaries: this yields $G^{\mathrm{dec}}_h$ (resp. $G^{\mathrm{dec}}_v$).

Next we disconnect $G_h$ along {\em vertical} boundaries: this yields a groupoid which we call $G_{AF}^{(2)}$:
\(
(G^{dec}_h)^{dec}_v=(G^{dec}_v)^{dec}_h=G_{AF}^{(2)}.
\)
\begin{Proposition}
 The groupoid $G_{AF}^{(2)}$ obtained after two disconnections is (as its name indicates) an $AF$ groupoid.
\end{Proposition}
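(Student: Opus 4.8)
The plan is to exhibit $G_{AF}^{(2)}$ as an increasing union of compact open elementary subgroupoids, which is the standard characterisation of an $AF$ groupoid. The ambient hypotheses hold from the outset: since $\R^2$ acts freely on $\Omega$, the groupoid $G=\Xi\rtimes\Z^2$ and each of its subgroupoids are principal, $G$ is \'etale and Hausdorff, and $\Xi$ is a Cantor set. So the task is to produce a filtration $\mathcal R_0\subseteq\mathcal R_1\subseteq\cdots$ of $G_{AF}^{(2)}$ by compact open elementary subgroupoids with $\bigcup_n\mathcal R_n=G_{AF}^{(2)}$; since each $\mathcal R_n$ will be open in $G$, no subtlety about the topology on $G_{AF}^{(2)}$ will arise.

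I would begin by pinning down $G_{AF}^{(2)}$ set-theoretically. Two preliminary facts about the chair should be read off from the substitution. First, every boundary of a chair tiling is a full horizontal line or a full vertical line (the horizontal/vertical alternative is in Definition~\ref{def:boundary}, and ``full line'' follows from the same statement). Second, because the $n$-th order supertiles are roughly square, no chair tiling carries two parallel horizontal boundaries, nor two parallel vertical ones --- so a chair tiling has at most one horizontal boundary line and at most one vertical boundary line, and its infinite-order supertiles are half-planes, quadrants, or all of $\R^2$. Granting this, and using that $T\mapsto T-x$ merely translates $T$ together with its boundary lines, the two disconnections unwind to the statement that $(T,x)\in G_{AF}^{(2)}$ if and only if neither the horizontal boundary line of $T$ nor the vertical one (if present) separates the origin-tile from the tile at $x$. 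Here one uses that the origin of a tiling in $\Xi$ is the barycentre of a tile, hence strictly interior to every supertile containing it and never on a boundary line, so that the conditions defining $\Xi_h^{\pm}$ and $\Xi_v^{\pm}$ really record on which side of the boundary the origin lies.

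Next, for $n\geq 0$ set $\mathcal R_n:=\{(T,x)\in G\ :\ 0\text{ and }x\text{ lie in one common }n\text{-th order supertile of }T\}$ (Definition~\ref{def:supertile}). I would then check: (i) $\mathcal R_n$ is compact open in $G$ and elementary --- partitioning $\Xi$ into the finitely many clopen cylinder sets indexed by a pair (type of $n$-th order supertile, position of the origin-tile inside it) displays $\mathcal R_n$ as a finite disjoint union of groupoids $Z\times\mathcal F$, with $Z$ a compact clopen subset of $\Xi$ and $\mathcal F$ the full equivalence relation on a finite set; only the bookkeeping of types, positions and the matrix $B^{(0)}$ is needed here; (ii) $\mathcal R_n\subseteq\mathcal R_{n+1}$, since an $n$-th order supertile sits inside an $(n+1)$-th order one; (iii) in the inverse-limit picture $\Xi\simeq\varprojlim\Gamma$, the chain $(\mathcal R_n)$ is the tail-equivalence filtration of the Bratteli diagram of $\omega$. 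The heart of the argument is then the identity $\bigcup_n\mathcal R_n=G_{AF}^{(2)}$. The inclusion $\subseteq$ holds because a boundary line, being an edge of the $n$-th order supertile decomposition, never passes through the interior of an $n$-th order supertile and so cannot separate two of its interior points. For $\supseteq$: if $(T,x)$ lies in no $\mathcal R_n$, then $x$ lies outside every $n$-th order supertile of the origin, hence outside their increasing union, which is the infinite-order supertile of the origin; so $x$ lies in a different infinite-order supertile, and --- all boundaries being horizontal or vertical --- the two are separated by a horizontal or a vertical boundary line of $T$, whence $(T,x)\notin G_{AF}^{(2)}$. Since each $\mathcal R_n$ is open in $G$, the union $\bigcup_n\mathcal R_n$ is an open (hence \'etale and principal) subgroupoid of $G$ over the totally disconnected space $\Xi$, presented as an increasing union of compact open elementary subgroupoids --- that is, an $AF$ groupoid. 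In particular $C^\ast(G_{AF}^{(2)})$ is an $AF$ algebra, whose dimension group is the direct limit read off the Bratteli diagram of $\omega$ (driven by $B^{(0)}$).

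The step I expect to be the main obstacle is the set-theoretic identification in the second paragraph: verifying against the explicit chair data that a tiling has at most one boundary line in each direction, and that Putnam's two disconnections remove \emph{exactly} the arrows crossing such a line, with no arrows spuriously lost or kept near boundary tilings. Once that is secured, the elementary structure of each $\mathcal R_n$, its clopenness, and the description of the dimension group are routine bookkeeping on the $24$ decorated prototiles listed in Section~\ref{sec-Setup}.
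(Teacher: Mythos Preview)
Your proposal is correct and follows essentially the same approach as the paper: define $\mathcal R_n$ as the equivalence relation ``$0$ and $x$ lie in the same $n$-th order supertile'', check these are compact \'etale, and show their union is $G_{AF}^{(2)}$ using that the only boundaries are horizontal or vertical lines. Your write-up is more explicit about the set-theoretic identification and the elementary structure of each $\mathcal R_n$ than the paper's, but the skeleton is identical. One small slip: the substitution matrix governing the Bratteli diagram of the supertile hierarchy is $B^{(2)}$ (the $24\times 24$ matrix on decorated tiles), not $B^{(0)}$.
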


\begin{proof}
 An element $(T,x)$ belongs to $G_{AF}^{(2)}$ if and only if $T$ and $T-x$ are not separated by a vertical or horizontal infinite boundary. 
 Since there are no other boundaries than vertical or horizontal (given the geometry of the tiles), it means that in $T$, $0$ and $x$ are contained in the same super-tile of some order.
 Let us define the equivalence relation $R_n$ on $\Xi$ by: ``$T \sim_{R_n} T'$ if and only if $T'=T-x$ and the points $0$ and $x$ are in the same $n$-order supertile of $T$''.
 One proves that $G_{AF}^{(2)}$ is the direct limit of the equivalence relations $R_n$. It is easy to check that the $R_n$ are compact and \'etale (see~\cite[Definition~3.1]{GPS04}), so $G_{AF}^{(2)}$ is indeed an $AF$ equivalence relation.
\end{proof}

After a second disconnection, the residual groupoid is
\(
(G^{dec}_h)^{res}_v.
\)
It consists of elements of $G^\mathrm{dec}_h$ which ``stay close to a vertical boundary''.
Equivalently, it consists of elements of $G$ which both don't cross a horizontal boundary and stay close to a vertical boundary.
In other words, it consists of elements of $(G^\mathrm{res}_v)^\mathrm{dec}_h$.
The first part of this proposition is proved similarly to Proposition~\ref{prop-morita}.
\begin{Proposition}
 The groupoid $(G^{dec}_h)^{res}_v$ is Morita equivalent to $(G^{(1)}_v)^\mathrm{dec}_h$.
 We call $G^{(1)}_{AF,v} := (G^{(1)}_v)^\mathrm{dec}_h$. It is an $AF$ groupoid.
\end{Proposition}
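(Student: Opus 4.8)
The plan is to break the statement into its two independent assertions and handle them in turn. For the Morita equivalence $(G^{dec}_h)^{res}_v \sim_{Morita} (G^{(1)}_v)^{dec}_h$, I would mimic the argument of Proposition~\ref{prop-morita} verbatim: introduce the auxiliary groupoid $\mathcal{H}$ of all pairs $(T,x)$ with $T$ in the $\R^2$-orbit of an element of $\Omega^{(1)}_v$, topologised by the convergence notion ``$(T_n,x_n) \to (T,x)$ if $x_n \to x$ and $T_n = T'_n - y_n$ with $y_n \to y$ in $\R^2$, $T'_n \to T' \in \Omega^{(1)}_v$, and $T'-y = T$'', and then pass to the \emph{disconnection} of this groupoid along horizontal boundaries, call it $\mathcal{H}^{dec}_h$. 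One checks that $\Xi_v$ and $\Xi^{(1)}_v$ are both abstract transversals of $\mathcal{H}$ in the sense of Muhly--Renault--Williams (closedness for the topology is the key point, and it follows because $\Omega^{(1)}_v$ is closed in $\Omega$, as noted in the text for boundary tilings). The disconnection operation commutes with reduction to a transversal here because an arrow crosses a horizontal boundary or not independently of the basepoint chosen along its source fibre; hence $(G^{dec}_h)^{res}_v$ and $(G^{(1)}_v)^{dec}_h$ are the reductions of the single groupoid $\mathcal{H}^{dec}_h$ to these two transversals, and the Morita equivalence follows from the Muhly--Renault--Williams linking-algebra theorem. This part is routine modulo bookkeeping.

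For the assertion that $G^{(1)}_{AF,v} = (G^{(1)}_v)^{dec}_h$ is an $AF$ groupoid, I would adapt the proof that $G^{(2)}_{AF}$ is $AF$. The groupoid $G^{(1)}_v = \Xi^{(1)}_v \rtimes \Z$ is the $\Z$-action by vertical translation on the discrete hull of the vertical boundary tilings. An element $(T,x)$ of this groupoid survives the disconnection along horizontal boundaries precisely when $0$ and $x$ lie in the same horizontal super-edge of some order in $T$ — here I am using that a vertical boundary tiling already has its $\gamma_i$'s in the $1$-skeleton (the vertical edges), and that within such a tiling the only remaining infinite boundaries are horizontal, so ``not separated by a horizontal boundary'' means ``eventually in a common horizontal super-tile strip along the vertical seam''. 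Concretely, define $S_n$ on $\Xi^{(1)}_v$ by ``$T \sim_{S_n} T'$ iff $T' = T - x$ with $x$ vertical and $0,x$ in the same $n$-th order super-edge of the vertical-boundary structure of $T$''. I would then verify that each $S_n$ is a compact open \'etale equivalence relation (finiteness comes from the inflation factor $\lambda = 2$: within a fixed $n$-th order super-edge there are boundedly many vertex positions) and that $(G^{(1)}_v)^{dec}_h = \varinjlim_n S_n$; by the characterisation in \cite[Definition~3.1]{GPS04} this exhibits it as an $AF$ equivalence relation.

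The main obstacle I anticipate is the precise identification of the ``residual after disconnection'' groupoid with the inductive limit of the $S_n$, i.e.\ verifying that disconnecting the one-dimensional $\Z$-system $G^{(1)}_v$ along horizontal boundaries really does kill \emph{every} arrow that is not captured at some finite stage, and no others. One has to rule out the possibility of an arrow $(T,x)$ with $0$ and $x$ on opposite sides of a horizontal seam that nonetheless never gets disconnected because the seam ``escapes to infinity'' — this is exactly where the border-forcing property and the fact that horizontal boundary tilings form a closed set must be invoked to guarantee the seam is detected at a finite level. A secondary subtlety is checking \'etaleness and compactness of the $S_n$ with the correct (disconnection-induced) topology rather than the naive product topology, since, as the text warns, reductions of $\Omega \rtimes \R^2$ to edge-transversals need not be \'etale in the naive sense; one should phrase the $S_n$ intrinsically on $\Xi^{(1)}_v$ where the $\Z$-action is genuinely \'etale, and only then invoke the previous proposition's Morita equivalence to transport the conclusion.
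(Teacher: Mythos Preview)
Your proposal is correct and follows essentially the same route as the paper. For the Morita equivalence, the paper simply says ``the proof is identical to Proposition~\ref{prop-morita}'', and your spelled-out version (building the auxiliary groupoid over the orbit of $\Omega^{(1)}_v$, disconnecting, and reducing to the two transversals) is precisely that argument made explicit. For the $AF$ claim, the paper phrases the finite-stage relations via the inverse-limit coordinates (``$\gamma_i$ and $\gamma'_i$ belong to the same vertical edge of $\Gamma$ for all $i$ big enough''), whereas you phrase them via $n$-th order super-edges; these are two wordings of the same filtration, since the $n$-th inverse-limit coordinate records exactly the $n$-th order super-edge type and position. Your anticipated obstacles are not real difficulties here: the ``seam escaping to infinity'' scenario is ruled out by definition of the disconnection (an arrow crosses a horizontal boundary or it does not), and working intrinsically on $\Xi^{(1)}_v$, as you yourself suggest, settles the \'etaleness question immediately.
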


\begin{proof}
For the first part (Morita equivalence of $(G^{dec}_h)^{res}_v$ and $(G^{(1)}_v)^\mathrm{dec}_h$), the proof is identical to Proposition~\ref{prop-morita}.

$G^{(1)}_v$ is the groupoid of the $\Z$-action on the Cantor set $\Xi^{(1)}_v$ by vertical translations.
Its subgroupoid $G^{(1)}_{AF,v}$ consists of elements are of the form $(T,x)$ where $T, T-x \in \Xi^{(1)}_v$ and such that $T$ and $T-x$ are not separated by a horizontal boundary.
If we see $\Xi^{(1)}_v$ as sitting in the inverse limit and represent $T$ and $T-x$ by $(\gamma_1, \gamma_2, \ldots)$ and $(\gamma'_1, \gamma'_2, \ldots)$ respectively, it then means that $\gamma_i$ and $\gamma'_i$ belong to the same vertical edge of $\Gamma$ for all $i$ big enough.
We can use this observation to write an increasing sequence of compact \'etale equivalence relations whose limit is $(G^{(1)}_v)^\mathrm{dec}_h =: G^{(1)}_{AF,v}$. So it is indeed an $AF$ groupoid.
\end{proof}

Let us summarize which groupoids appear above, as well as the equivalences between them.

\begin{enumerate}
\item The groupoids associated to $2$-dimensional systems, as well as their disconnected versions: $G$ is the groupoid of the $\Z^2$-action; $G^\mathrm{dec}_{h/v}$ is the groupoid obtained from $G$ by removing all arrows which pass through a horizontal (resp.\ vertical) boundary; $G^{(2)}_{AF}$ is the groupoid obtained from $G$ by removing arrows which pass through a horizontal \emph{or} a vertical boundary, which is an $AF$ groupoid.
\item The groupoids associated with $1$-dimensional systems:
  \begin{enumerate}
  \item $G^\mathrm{res}_h$ and $G^{(1)}_h$ (resp.\ $G^\mathrm{res}_v$ and $G^{(1)}_v$) are Morita equivalent. The first consists of all arrows which pass through a horizontal (resp.\ vertical) boundary, with \emph{an appropriate topology} which makes it \'etale (and makes its space of units complete).
  The second is the groupoid of the $\Z$-action on $\Xi^{(1)}_h$ (resp.\ $\Xi^{(1)}_v$).
  \item $G^{(1)}_{AF,v}$ and $G^{(2)}_{AF,h}$ are the $AF$-subgroupoids of respectively $G^{(1)}_v$ and $G^{(1)}_h$, which are obtained by disconnecting the orbits at points of $\Omega^{(0)}$.
  Because of the particular substitution we are using (with rotational symmetry of angle $\pi/4$), these two groupoids are isomorphic.
  \end{enumerate}
 \item The groupoid associated with a $0$-dimensional system: it is the trivial groupoid $\Omega^{(0)}$, with no arrows except the units.
\end{enumerate}

\section{Putnam's exact sequences for square tilings of the plane}
\label{sec-PutES}

The $K$-theory of a groupoid and its disconnected version are related by a $6$-term exact sequence, as established by Putnam.
The result first appeared for $\Z$-actions on a Cantor set~\cite{Put89}, and was later generalized for more general groupoids and disconnections.

Let us state first the statement for $\Z$-actions. It was originally stated for free, minimal $\Z$-actions, but in the light of the later papers~\cite{Put97, Put98}, this assumption is not essential if the statement is made in the following form.

\begin{Theorem}
 Let $G^{(1)}_h$ be the groupoid of the $\Z$-action on $\Xi^{(1)}_h$, and $G^{(1)}_{AF,h}$ its $AF$-subgroupoid obtained by disconnecting the orbits at $\Omega^{(0)}$. Then we have the following exact sequence:
 \[
  0 \rightarrow \Z \rightarrow C(\Omega^{(0)};\Z) \rightarrow K_0 \big( C^\ast (G^{(1)}_{AF,h}) \big)
    \rightarrow K_0 \big( C^\ast (G^{(1)}_h) \big) \rightarrow 0.
 \]
\end{Theorem}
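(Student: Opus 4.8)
The plan is to obtain the stated four-term sequence as a special case of Putnam's six-term exact sequence relating the $K$-theory of an \'etale groupoid, of a disconnection of it, and of the associated residual groupoid; this is the machinery of~\cite{Put97,Put98}, which generalizes~\cite{Put89}. I would apply it to the groupoid $G^{(1)}_h = \Xi^{(1)}_h \rtimes \Z$, to its disconnection $G^{(1)}_{AF,h}$ along $\Omega^{(0)}$, and to the residual groupoid. Since $\Omega^{(0)}$ is $0$-dimensional and carries no dynamics, this residual groupoid is the trivial groupoid on $\Omega^{(0)}$ (only units), exactly as in the $0$-dimensional case discussed after Proposition~\ref{prop-morita}; its $C^\ast$-algebra is $C(\Omega^{(0)})$, so $K_0\big(C(\Omega^{(0)})\big) = C(\Omega^{(0)};\Z)$ and $K_1\big(C(\Omega^{(0)})\big) = 0$.

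The next step is to feed in the vanishing of $K_1$ for the two $AF$-algebras in play: $K_1\big(C^\ast(G^{(1)}_{AF,h})\big) = 0$ because $G^{(1)}_{AF,h}$ is $AF$, and $K_1\big(C(\Omega^{(0)})\big) = 0$ because $\Omega^{(0)}$ is totally disconnected. The cyclic six-term sequence then collapses to the exact sequence
\[
0 \longrightarrow K_1\big(C^\ast(G^{(1)}_h)\big) \longrightarrow C(\Omega^{(0)};\Z) \longrightarrow K_0\big(C^\ast(G^{(1)}_{AF,h})\big) \longrightarrow K_0\big(C^\ast(G^{(1)}_h)\big) \longrightarrow 0 ,
\]
the left-hand injectivity coming from $K_1\big(C^\ast(G^{(1)}_{AF,h})\big)=0$ and the right-hand surjectivity from $K_1\big(C(\Omega^{(0)})\big)=0$. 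So it only remains to identify the leading term $K_1\big(C^\ast(G^{(1)}_h)\big)$ with $\Z$. This is where I would use minimality: the space $\Xi^{(1)}_h$ is the canonical transversal of the one-dimensional substitution tiling space $\Omega^{(1)}_h$ coming from the edge substitution with matrix $B^{(1)}$, and assuming $B^{(1)}$ primitive the $\Z$-action on $\Xi^{(1)}_h$ is minimal. Since $\Xi^{(1)}_h$ is a Cantor set, $K_0(C(\Xi^{(1)}_h)) = C(\Xi^{(1)}_h;\Z)$ and $K_1(C(\Xi^{(1)}_h)) = 0$, so the Pimsner--Voiculescu sequence yields $K_1\big(C^\ast(G^{(1)}_h)\big) \simeq \Ker\big(\mathrm{id}-\phi_\ast \colon C(\Xi^{(1)}_h;\Z) \to C(\Xi^{(1)}_h;\Z)\big)$, i.e.\ the group of $\phi$-invariant $\Z$-valued continuous functions on $\Xi^{(1)}_h$. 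By minimality the only $\phi$-invariant clopen subsets are $\emptyset$ and $\Xi^{(1)}_h$, hence every such function is constant and this kernel is $\Z\cdot[\ind]\simeq\Z$. Inserting this into the collapsed sequence gives the claim.

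The step I expect to be the main obstacle is the first one: checking that $G^{(1)}_{AF,h}\subset G^{(1)}_h$ genuinely satisfies the hypotheses of~\cite{Put97,Put98} and, in particular, that disconnecting $\Xi^{(1)}_h\rtimes\Z$ along $\Omega^{(0)}$ really has the \emph{trivial} groupoid on $\Omega^{(0)}$ as residual groupoid rather than one with extra arrows. This rests on the geometry of tilings by squares: there are no infinite boundaries other than horizontal and vertical ones, the set along which orbits get broken is $0$-dimensional, and the substitution combinatorics prevent any residual recurrence near $\Omega^{(0)}$. Everything else---verifying primitivity of $B^{(1)}$ so that minimality (hence $K_1\simeq\Z$) holds, and translating freely between the $\Z$-action and \'etale-groupoid descriptions---is comparatively routine. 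Alternatively, if one wishes to stay within~\cite{Put89}, one checks minimality at the outset, and the statement is then the specialization of Putnam's original theorem to a non-periodic minimal Cantor $\Z$-system, with $K_1$ of the crossed product identified as above.
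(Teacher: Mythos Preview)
Your overall strategy is correct and is exactly how one unpacks the result: collapse Putnam's six-term sequence using $K_1=0$ for the two $AF$ pieces, then identify the remaining term $K_1\bigl(C^\ast(G^{(1)}_h)\bigr)$. The paper itself offers no proof of this theorem---it is simply quoted as an instance of Putnam's results~\cite{Put89,Put97,Put98}---so in outlining a derivation you are already providing more than the paper does, and your identification of the residual groupoid with the trivial groupoid on $\Omega^{(0)}$ matches the paper's later use of $G^{(0)}_{AF}$ in~\eqref{eq-ES-d0}.

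There is, however, a real gap in your justification that $K_1\bigl(C^\ast(G^{(1)}_h)\bigr)\simeq\Z$. You propose to deduce minimality of the $\Z$-action on $\Xi^{(1)}_h$ from primitivity of $B^{(1)}$, calling this step ``comparatively routine''. But $B^{(1)}$ is \emph{not} primitive: from the symbolic matrix~\eqref{subst-edges} (or the explicit $10\times 10$ matrix in the appendix) each edge type $a,b,d,e$ substitutes to one copy of itself and one copy of $\rho c$, while $c$ substitutes to $\{c,\rho c\}$. The first row of $\bigl(B^{(1)}\bigr)^n$ is $e_1^T$ for every $n$. Consequently the induced one-dimensional system is not minimal either: a boundary sequence whose level-$n$ edge at the origin is of type $a$ for all $n$ contains an $a$, whereas the boundary sequences built only from $c,\rho c$ form a proper closed invariant subsystem containing no $a$; the former cannot lie in the orbit closure of the latter.

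The conclusion $K_1\simeq\Z$ is nonetheless true, and the paper uses it freely, but it needs a different argument. What Pimsner--Voiculescu actually requires is not minimality but the absence of nontrivial \emph{clopen} invariant sets, equivalently that every continuous shift-invariant $\Z$-valued function is constant. One way to see this here: the $\{c,\rho c\}$-subshift is the unique minimal subset (its $2\times 2$ substitution matrix \emph{is} primitive), and since in $\omega^m(e)$ at most one out of $2^m$ edges is of type $\neq c,\rho c$, every orbit accumulates on this minimal core; hence any locally constant invariant function is determined by its (constant) value there. Alternatively you can invoke, as the paper hints just before the theorem, that the later papers~\cite{Put97,Put98} dispense with minimality altogether, and then compute $K_1$ separately. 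Either way, the step you flagged as routine is exactly the one that requires care.
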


 Note that the group $C(\Omega^{(0)};\Z)$ is $\Z^{\# \Omega^{(0)}}$. By theorem, the first and last nontrivial arrows are the map $1 \mapsto (1, \ldots, 1)$ and induced by inclusion of $C^\ast$-algebras, respectively.
 Let us describe the second arrow.
 Let $T_0$ be a point in $\Omega^{(0)}$. Then there is a smallest $x>0$ (equal to $1/2$ if the tiles have unit length) such that $T_0 + (x,0)$ and $T_0 - (x,0)$ belong to $\Xi_h^{(1)}$. Call $T_0^+$ and $T_0^-$ these two tilings respectively.
 In the inverse limit construction, $T_0^\pm = (\gamma^\pm_0, \gamma^\pm_1, \ldots)$.
 
 Let
 \[
 U^\pm_n = \{ (\gamma^\pm_0, \ldots, \gamma^\pm_n, *, *, \ldots) \} \subset \varprojlim_n \Gamma.
 \]
 Then for some $n$ big enough, the class in $K_0$ of $[\chi(U^+_n)] - [\chi(U^-_n)]$ does not depend on $n$ anymore.
 The image of the generator corresponding to $T_0$ in $X^{\# \Omega^{(0)}}$, is defined to be this $K$-theory class.

\begin{Theorem}
\label{thm-ESPut}
Let $\mathcal G$ be a groupoid, and $b \in \{h,v\}$ be a boundary. In the following $\mathcal G$ will be either $G$ or $G^\mathrm{dec}_h$.
Then, up to Morita equivalence, $\mathcal G^\mathrm{res}_b$ is respectively $G^{(1)}_b$ or $G^{(1)}_{AF,b}$. In both cases, its unit space is $\Xi ^{(1)}_b$.

Putnam's exact sequence for the disconnection of $\mathcal G$ along a boundary $b \in \{h,v\}$, as described in Section~\ref{ssec-decres} reads:
\begin{equation}
\label{eq-ES-Putnam}
\xymatrix{
K_0 \bigl( C^\ast(\mathcal G^{res,b}) \bigr) \ar@{->}[r]^{\beta_0} & 
K_0 \bigl( C^\ast(\mathcal G^{dec,b}) \bigr) \ar@{->}[r]^{\gamma_0} & 
K_0 \bigl( C^\ast(\mathcal G) \bigr) \ar@{->}[d]^{\alpha_1} \\
K_1 \bigl( C^\ast(\mathcal G) \bigr) \ar@{->}[u]^{\alpha_0} &  
K_1 \bigl(  C^\ast(\mathcal G^{dec,b})\bigr)  \ar@{->}[l]^{\gamma_1} & 
K_1 \bigl( C^\ast(\mathcal G^{res,b}) \bigr) \ar@{->}[l]^{\beta_1}
}
\end{equation}
The maps $\gamma$ are induced by the inclusion $\mathcal G^{dec,b} \subset \mathcal G$.
\end{Theorem}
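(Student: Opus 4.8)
The plan is to derive the six-term sequence \eqref{eq-ES-Putnam} from the general machinery of Putnam~\cite{Put97,Put98}, which applies to an \'etale groupoid $\mathcal G$ equipped with a closed subgroupoid along which one disconnects, producing a short exact sequence of $C^\ast$-algebras whose $K$-theory long exact sequence is precisely the desired hexagon. First I would set up the short exact sequence of $C^\ast$-algebras underlying the disconnection. Disconnecting $\mathcal G$ along the boundary $b$ removes exactly the arrows crossing the infinite boundary of type $b$; the ideal generated by these ``crossing'' arrows in $C^\ast(\mathcal G)$ is a $C^\ast$-algebra Morita equivalent to $C^\ast(\mathcal G^{\mathrm{res},b})$, while the quotient is $C^\ast(\mathcal G^{\mathrm{dec},b})$. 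Concretely one writes
\[
0 \longrightarrow \mathcal I \longrightarrow C^\ast(\mathcal G) \longrightarrow C^\ast(\mathcal G^{\mathrm{dec},b}) \longrightarrow 0,
\]
where $\mathcal I$ is the closure of the functions supported on the crossing arrows. This requires checking that $\mathcal G^{\mathrm{dec},b}$ is an open subgroupoid (so its algebra sits as a quotient) and that the restriction-to-boundary procedure realizes $\mathcal I$ correctly; this is where the ``appropriate topology'' on $\mathcal G^{\mathrm{res},b}$ making it \'etale with complete unit space $\Xi^{(1)}_b$ (as recalled in the statement and established in Section~\ref{ssec-decres}) is essential — without it the ideal would not be a groupoid $C^\ast$-algebra at all, which is exactly the ``irritating'' non-\'etale phenomenon flagged earlier for the reduction to $\Xi^{(1)}$.

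Next I would identify $\mathcal I$ up to Morita equivalence with $C^\ast(\mathcal G^{\mathrm{res},b})$. The crossing arrows, together with the convergence of boundary tilings, form (after the topological completion) a groupoid that is exactly the reduction of a larger groupoid to a transversal, and Proposition~\ref{prop-morita} (and its analogue for $\mathcal G=G^{\mathrm{dec}}_h$, proved identically) gives the Morita equivalence between $\mathcal G^{\mathrm{res},b}$ and $G^{(1)}_b$ or $G^{(1)}_{AF,b}$ respectively, with unit space $\Xi^{(1)}_b$ in both cases. Since Morita equivalent $C^\ast$-algebras have naturally isomorphic $K$-theory, I may replace $K_\ast(\mathcal I)$ by $K_\ast\bigl(C^\ast(\mathcal G^{\mathrm{res},b})\bigr)$. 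The remaining input is purely homological: applying the standard six-term $K$-theory exact sequence to the short exact sequence above yields the hexagon with the ideal's $K$-groups in the left column, $K_\ast\bigl(C^\ast(\mathcal G)\bigr)$ in the middle, and $K_\ast\bigl(C^\ast(\mathcal G^{\mathrm{dec},b})\bigr)$ on the right, the connecting maps being $\alpha_0,\alpha_1$ and the quotient maps being $\gamma_0,\gamma_1$ — which are indeed induced by the inclusion $\mathcal G^{\mathrm{dec},b}\subset \mathcal G$ at the level of algebras, as asserted.

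The main obstacle I expect is not the abstract $K$-theory long exact sequence, which is automatic, but the bookkeeping needed to see that this abstract sequence coincides with the one Putnam writes down — in particular, verifying that the disconnection construction of Section~\ref{ssec-decres} really produces the extension $0\to \mathcal I\to C^\ast(\mathcal G)\to C^\ast(\mathcal G^{\mathrm{dec},b})\to 0$ with $\mathcal I$ Morita equivalent to $C^\ast(\mathcal G^{\mathrm{res},b})$ \emph{with the topology we have specified}, and that this works uniformly for both choices $\mathcal G=G$ and $\mathcal G=G^{\mathrm{dec}}_h$ (in the latter case the disconnection along $v$ must be carried out inside the already-disconnected groupoid, and one must check that the relevant subspaces remain closed and the groupoids remain \'etale). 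Once these identifications are in place — and the commutation $(G^{\mathrm{dec}}_h)^{\mathrm{dec}}_v=(G^{\mathrm{dec}}_v)^{\mathrm{dec}}_h=G^{(2)}_{AF}$ together with the Morita equivalences already proved guarantees consistency — the statement follows by citing the general theorem of Putnam applied in each of the two cases, so the proof is essentially a matter of assembling results already available in~\cite{Put97,Put98} and in Section~\ref{ssec-decres}.
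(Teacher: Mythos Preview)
Your proposal contains a genuine structural error in the short exact sequence you posit. You write
\[
0 \longrightarrow \mathcal I \longrightarrow C^\ast(\mathcal G) \longrightarrow C^\ast(\mathcal G^{\mathrm{dec},b}) \longrightarrow 0,
\]
claiming that $\mathcal G^{\mathrm{dec},b}$ being open makes its algebra a \emph{quotient}. This is backwards. The disconnected groupoid $\mathcal G^{\mathrm{dec},b}$ is an open subgroupoid with the \emph{same} unit space $\Xi$ as $\mathcal G$, so $C_c(\mathcal G^{\mathrm{dec},b})\subset C_c(\mathcal G)$ and $C^\ast(\mathcal G^{\mathrm{dec},b})$ is a \emph{subalgebra} of $C^\ast(\mathcal G)$, not a quotient; indeed it contains the unit and cannot be a proper ideal. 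This is consistent with the statement itself, which says explicitly that the maps $\gamma_\ast$ are induced by the inclusion $\mathcal G^{\mathrm{dec},b}\subset\mathcal G$. Moreover, comparing your proposed hexagon with the one in \eqref{eq-ES-Putnam}: a short exact sequence $0\to\mathcal I\to C^\ast(\mathcal G)\to C^\ast(\mathcal G^{\mathrm{dec},b})\to 0$ would give $K_\ast(\mathcal G)\to K_\ast(\mathcal G^{\mathrm{dec},b})$, whereas the sequence in the theorem has the arrow $K_\ast(\mathcal G^{\mathrm{dec},b})\to K_\ast(\mathcal G)$ going the other way.

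The paper does not give an independent proof of this theorem; it quotes the result from~\cite{Put97,Put98} and only describes the maps in the specific setting of square tilings. But the mechanism in Putnam's papers is not an ideal--quotient extension of the kind you describe. One has an inclusion $C^\ast(\mathcal G^{\mathrm{dec},b})\hookrightarrow C^\ast(\mathcal G)$ of unital $C^\ast$-algebras, and the six-term sequence arises from the \emph{relative} $K$-theory of this pair (equivalently, the $K$-theory of the mapping cone of the inclusion). Putnam's excision theorem then identifies these relative groups with $K_\ast\bigl(C^\ast(\mathcal G^{\mathrm{res},b})\bigr)$; this is where the $K\!K$-class mentioned after the theorem (defining $\alpha_0$) enters. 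Your identification of the residual groupoid up to Morita equivalence, and your remarks about the iterated disconnection, are fine; what needs to be replaced is the non-existent extension by the mapping-cone/relative $K$-theory framework that actually underlies~\cite{Put98}.
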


Let us describe the maps $\alpha$ and $\beta$ in our setting.
A full description of these maps is left for a future paper. We just give them in our setting (two-dimensional tiling by squares).
The map $\beta_0$ is similar to the map described for the previous exact sequence: assume for example that $b=h$ so that the disconnection is done along the horizontal boundary.
The $C^*$-algebra $C^\ast(\mathcal G^{res,h})$ is Morita equivalent to either $C^\ast( G^{(1)}_h)$ or $C^\ast(G^{(1)}_{h,AF})$. In either case, these algebras are generated by elements of the form $\ind(U) u^n$, where $\ind(U)$ is the indicator function of the clopen set $U$ of $\Xi^{(1)}_h$, and $n \in \mathbb N$, and their $K$-theory is described by the classes of projections of the form $\ind(U)$, for $U$ a clopen set.

We may assume that $U$ has the form (as seen in the inverse limit description):
\[
 U = U(x_1, x_2, \ldots, x_k) = \{ (x_1, x_2, \ldots, x_k, \gamma_{k+1}, \ldots) \ ; \ \gamma_i \in \Gamma^{(1)} \}.
\]

Here, the $x_i$'s and $\gamma_i$'s are elements in $\Gamma^{(1)}$ (the $x_i$'s are fixed and define~$U$).
Let
\[
V(n) = V(x_1, \ldots, x_k; n) := \{ (x_1, x_2, \ldots, x_k, \gamma_{k+1}, \ldots, \gamma_n, \gamma'_{n+1} \ldots) \ ; \ \gamma_j \in \Gamma^{(1)}, \ \gamma'_i \in \Gamma \}.
\]

It is a subset of $\Omega$ whose intersection with $\Omega^{(1)}_h$ is $U$. We call it a {\em thickening of order $n$} of $U$.
Now, let $V(n)^\pm := V(n) - (0,\pm 1/2)$. Each of these sets is an open set in $\Xi$.
Finally, for a finite sum $\sum_k c_k \ind(U_k) u^k$ and $n \in \N$, define the pair of maps
\[
 \phi_n^\pm: \sum_k c_k \ind(U_k) u^k \mapsto \sum_k c_k \ind(V_k(n)^\pm) u_{(1,0)}^k, 
\]
where $u_{(1,0)}$ is the unitary in $C(\Xi) \rtimes \Z^2$ which implements the translation by $(1,0)$.
It induces a map in $K$-theory for $n$ big enough, and the difference $(\phi^+_n)_* - (\phi^-_n)_*$ applied to a $K$-theory element doesn't depend on $n$ if $n$ is big enough.

For our purpose, since $K_0(C^\ast(\mathcal G^{res,h}))$ is generated by classes of characteristic functions of clopen sets, we just need:
\[
\beta_0 ([\ind(U)]) := [\ind(V(n)^+] - [\ind(V(n)^-)],
\]
and
\[
\beta_1 ([u]) := [u_{(1,0)}] - [u_{(1,0)}^*] = 0.
\]

The map $\alpha_0$ is given by a $K\! K$-theory class (see~\cite{Put98}), and is formally defined as follows.
Let $\chi^+$ be the indicator function of $\Xi^+_h$.
Let $E:= C^\ast(\mathcal G^{res,h})$ be viewed as a Hilbert right $C^*$-module over itself.
It carries a left $C^\ast(\mathcal G)$-action, which makes it a bi-module. Note also that $\chi^+$ and $(1-\chi^+)$ also act on the left.
For $[u] \in K_1 \bigl( C^\ast(\mathcal G) \bigr)$, let $U:= \chi^+ u \chi^+ + (1-\chi^+)$.
It defines a linear map of the module $E$ by multiplication on the left. We then define
\[
 \alpha_0 ([u]) := [\Ker (U)] - [\Ker (U^*)].
\]
Here, the $K$-theory class is defined in terms of formal difference of (classes of) finitely generated projective modules over the algebra. 
Remark that the modules $\Ker (U)$ and $\Ker (U^*)$ are submodules of $\chi^+ E$, so it is equivalent to define $\alpha_0$ by $\alpha_0([u]) = [\Ker (\chi^+ u \chi^+)] - [\Ker (\chi^+ u^* \chi^+)]$, where $\chi^+ u \chi^+$ is viewed as a map $\chi^+ E \rightarrow \chi^+ E$.
In our example, these modules are complementable, and define $K$-elements.

\begin{figure}[htp]
\begin{center}
\includegraphics[scale=0.8]{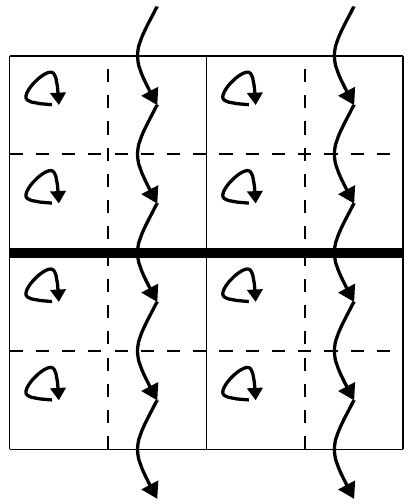}
\includegraphics[scale=0.8]{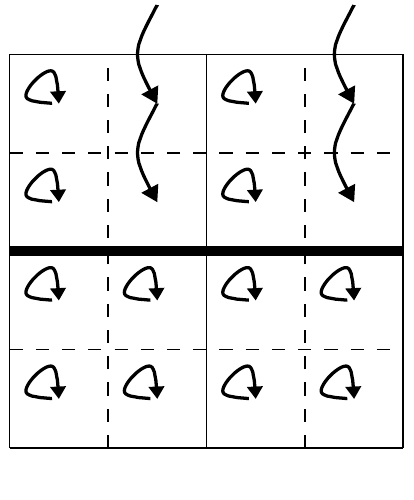}
\includegraphics[scale=0.8]{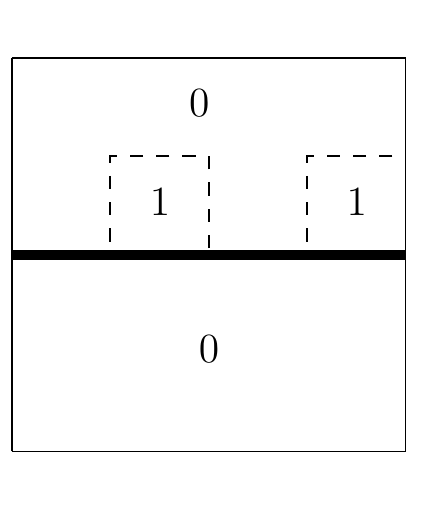}
\caption{{\small On the left, the unitary $(u^{(1)})^*$: the unitary, seen as a function on $C_c(G)$, is $1$ on the arrows pictured, and $0$ elsewhere. Alternatively, seen as acting on the unit space $\Xi$, $(u^{(1)})^*$ is a translation by $(0,-1)$ on the tilings which have their origin in the right half of a $1$-supertile, and acts trivially on other tilings.
Middle: the operator $\chi^+ (u^{(1)})^* \chi^+ + (1-\chi^+)$. Right, the projection on the kernel of this operator.}}
\label{fig:unitaries}
\end{center}
\end{figure}

\begin{Example} \label{ex-alpha}
 Let us compute the image of a family of unitaries in $C^\ast (G)$ under $\alpha_0$.
 Let $u^{(0)}:= u_{(0,1)}$ be the unitary implementing vertical translation on $\Xi$ by vector $(0,1)$. Let $\chi_1$ be the indicator function of those tilings for which the tile at the origin in on the right-hand side of the supertile in which it sits.
 Define $\chi_n (T) := \prod_{i=1}^n \chi_1 (\omega^{-n+1} (T))$ (so that $\chi_n(T) = 1$ if and only if the tile at the origin of $T$ lies on the right-most side of the $n$-th order supertile containing the origin).
  Note that $\chi_1$ commutes with $u^{(0)}$.
  Define then $u^{(n)} := u^{(0)} \chi_n + (1-\chi_n)$. It is a unitary operator.
  See Figure~\ref{fig:unitaries}.
  
  It can be checked by hand that $[\Ker (\chi^+ u^{(1)} \chi^+ + (1-\chi^+))]$ is trivial, and $[\Ker (\chi^+ {u^{(1)}}^* \chi^+ + (1-\chi^+))]$ is generated by the indicator function $p$ defined by $p (T) := 1$ if and only if the tile at the origin is on the bottom of an infinite-order supertile (i.e.\@ is adjacent to an infinite boundary, above the boundary).
  Furthermore, $[\Ker (\chi^+ {u^{(1)}}^* \chi^+ + (1-\chi^+))]$ is generated by the projection $q(T) = p(T) \chi_n(T)$.
\end{Example}

\section{$K$-theory of the chair tiling}
\label{sec-Kth}

In this section, we compute the $K$-theory groups of the C$^\ast$-algebra of $G$, denoted \(K_\ast \bigl( C^\ast(G)\bigr)\).
It is done by a careful analysis of the substitution on tiles, edges and vertices, and of the adjacency relations.
What we have so far are the decorated tiles, edges and vertices, as well as the substitution matrices for each of these.
In the case of vertex-stars, it is enough to consider the substitution restricted to its eventual range, which is why Figure~\ref{fig-vertex-star} only pictures five vertex stars.
Iterations of the substitution on these produces fixed points.

\subsection*{$K_0$-groups of the $AF$-groupoids}
\label{ssec-dimgp}

We compute here the $K_0$-groups of the $AF$-groupoids $G_{AF}^{(2)}$, $G_{AFh}^{(1)}$, $G_{AFv}^{(1)}$, and $G_{AF}^{(0)}$, and describe their generators explicitly.

\subsubsection*{Method} This is the easy step: once the setup step has been done, we have substitution matrices written down. The $K$-groups are direct limits free abelian groups under maps given by these matrices. Using a computer program to compute the eigenvalues and eigenvectors of these matrices allows to compute the $K$-groups and find generators.

\subsubsection*{$K$-theory of $C^\ast ( G^{(2)}_{AF} )$}
The groupoid $G^{(2)}_{AF}=G_{AF}$ is the $AF$-groupoid of the tiling and its $K_0$-group is the dimension group of the substitution.
We consider prototiles modulo rotation $r=e^{\imath \pi k/2}$, $k=0,1,2,3$.
There are 6 decorated tiles up to rotation, which are shown in Figure~\ref{fig-tiles}.

We set $A^0,B^0,\ldots, D_2^0$ to be the prototiles with their outwards arrows pointing towards the north-east direction as shown in figure~\ref{fig-tiles}.
The substitution matrix can be written symbolically (using rotations):
\[
\left( 
\begin{array}{ccccccccccc}
2 & 1 & 1 & 1 & 0 & 0  \\
0 & 1 & 0 & 0 & 1 & 1  \\
0 & 0 & 1 & 0 & 1 & 0  \\
0 & 0 & 0 & 1 & 0 & 1  \\
r  & r & r & r & r & r  \\
r^3 & r^3 & r^3 & r^3 & r^3 & r^3  \\
\end{array}
\right)
\]

It acts on each summand of $\Z^{24}=\Z^6\oplus \ldots \oplus\Z^6$, with basis elements $A^0,B^0,\ldots, D_2^0$, $rA^0,rB^0,\ldots, rD_2^0$, $\ldots$, $r^3A^0,r^3B^0,\ldots, r^3D_2^0$.
It is possible to find the $K$-theory of this $AF$-algebra by looking at various representations of the rotation group $\Z / 4\Z$.
It is also possible to use mathematical software to write the $24 \times 24$ matrix and compute its eigenvalues and eigenvectors.
We let $B = B^{(2)}$ be the $24 \times 24$ substitution matrix.
The $K$-theory of the $AF$-algebra is
\[
 K_0\bigl( C^\ast(G_{AF}^{(2)}) \bigr) = \varinjlim \bigl( \Z^{24} \overset{B^T}{\longrightarrow} \Z^{24} \overset{B^T}{\longrightarrow} \ldots \bigr).
\]

We find here:

\begin{equation}
\label{K-Gaf}
K_0\bigl( C^\ast(G_{AF}^{(2)}) \bigr) \simeq \Z\bigl[\frac{1}{4}\bigr] \oplus \Z\bigl[\frac{1}{2}\bigr]^2 \oplus \Z^{12}, \qquad K_1\bigl( C^\ast(G_{AF}^{(2)}) \bigr) \simeq  0.
\end{equation}

Generators can be chosen as follows:
\begin{itemize}
\item $\Z\bigl[\frac{1}{4}\bigr]$: \( \ind_A+ \ind_B + \ind_{C_1} + \ind_{C_2} + \ind_{D_1}+  \ind_{D_2}=(1+r+r^2+r^3) (\ind_{A^0}+ \ind_{B^0} + \ind_{C_1^0} + \ind_{C_2^0} + \ind_{D_1^0}+  \ind_{D_2^0})\), which corresponds to the constant function equal to $1$ on $\Xi$;
\item $\Z\bigl[\frac{1}{2}\bigr]^2$: \( (r^2-1) \bigl(\ind_{A^0} + \ind_{B^0} + \ind_{C_1^0} + \ind_{C_2^0} + \ind_{D_1^0} + \ind_{D_2^0} \bigr)\) and
\( r(r^2-1) \bigl(\ind_{A^0} + \ind_{B^0} + \ind_{C_1^0} + \ind_{C_2^0} + \ind_{D_1^0} + \ind_{D_2^0} \bigr)\).
These corresponds to projections with constant value $\pm 1$ on tiles with ``oposite orientations'' (e.g.\@ $-1$ on tiles with a north-east arrow and $+1$ on tiles with a south-west arrow);
\item  $\Z^{12}$:
   \(\ind_{A}+\ind_{C_1} - \ind_{D_2}\),
   \(\ind_A+\ind_B-\ind_{C_1}-\ind_{D_1}\),
   \(\ind_B-\ind_{C_1}-\ind_{C_2}\) (rotational invariance of order $4$), 
   \((1-r+r^2-r^3)\bigl( \ind_{A^0}+\ind_{B^0}+ \ind_{C_2^0}+\ind_{D_2^0}\bigr)\),
   \((1-r+r^2-r^3)\bigl( \ind_{A^0}+\ind_{B^0}+ \ind_{C_1^0}+\ind_{D_1^0}\bigr)\),
   \((1-r+r^2-r^3)\bigl( -\ind_{B^0}+ \ind_{C_1^0}+\ind_{C_2^0}\bigr)\) (rotational invariance of order $2$),
   \( (1-r^2) \bigl(\ind_{B^0}- \ind_{C_1^0}+\ind_{D_2^0} - r(\ind_{A^0} -\ind_{B^0}) \bigr)\), and $r$ times this latter vector,
   \( (1-r^2) \bigl( \ind_{C_1^0}+\ind_{D_1^0} + r(\ind_{A^0}+\ind_{B^0})\bigr)\), and $r$ times this latter vector,
   \( (1-r^2) \bigl( -\ind_{B^0}+ \ind_{C_1^0}+\ind_{C_2^0}\bigr)\), and $r$ times this latter vector.
\end{itemize}
For later purpose we notice that the following vectors are in the kernel of the transpose of the substitution matrix (which is of dimension 8):
\(\ind_A+ \ind_B + \ind_{C_1} + \ind_{C_2} - 2\ind_{D_1}\),
\(\ind_A+ \ind_B + \ind_{C_1} + \ind_{C_2} - 2\ind_{D_2}\) (rotational invariance of order $4$), 
\((1-r+r^2-r^3)\bigl( \ind_{A^0}+\ind_{B^0}+ \ind_{C_1^0}+\ind_{C_2^0}+2\ind_{D_1^0}\bigr)\),
\((1-r+r^2-r^3)\bigl( \ind_{A^0}+\ind_{B^0}+ \ind_{C_1^0}+\ind_{C_2^0}+2\ind_{D_2^0}\bigr)\) (rotational invariance of order $2$),
\( (1-r^2) \bigl( 2\ind_{D_2^0} - r(\ind_{A^0}+\ind_{B^0}+\ind_{C_1^0}+\ind_{C_2^0})\bigr)\), and $r$ times this latter vector, 
\( (1-r^2) \bigl( 2\ind_{D_1^0} + r(\ind_{A^0}+\ind_{B^0}+\ind_{C_1^0}+\ind_{C_2^0})\bigr)\), and $r$ times this latter vector.

\subsubsection*{$K$-theory of $C^\ast ( G^{(1)}_{AFh} )$ and $C^\ast ( G^{(1)}_{AFv} )$}
The groupoid $G^{(1)}_{AFh}$ is the $AF$-groupoid of the $1$-dimensional substitution induced on horizontal edges.
Its $K_0$-group is the dimension group of this induced $1$-dimensional substitution.

There are 10 decorated edges as shown in Figure~\ref{fig-edges}.
We consider proto-edges modulo rotations $\rho=e^{\imath k \pi}$, $k=0,1$.
We set
\( a=[1_a], b=[1_b], c=[2], d=[5], e=[6]\), and specifiy edges with a given orientation: \(a^0=1_a, b^0=1_b, c^0=2, d^0=5, e^0=6\) with the orientations as shown in Figure~\ref{fig-edges}.

The substitution matrix reads:

\begin{equation}
\label{subst-edges}
\left( 
\begin{array}{ccccc}
1 & 0 & 0 & 0 & 0   \\
0 & 1 & 0 & 0 & 0   \\
\rho & \rho & 1+\rho & \rho & \rho  \\
0 & 0 & 0 & 1 & 0 \\
0 & 0 & 0 & 0 & 1 
\end{array}
\right)
\end{equation}

It acts on each summand of $\Z^{10}=\Z^5\oplus \Z^5$, writting the basis elements $a^0,\ldots e^0$ and $\rho a^0, \ldots, \rho e^0$.
It also corresponds to a $10 \times 10$ substitution matrix $B^{(1)}$, and the $K$-theory of $G^{(1)}_{AFh}$ is the direct limit of $\Z^{10}$ under the transpose of this matrix.
We find:

\begin{equation}
\label{K-G1AFh}
K_0\bigl( C^\ast(G^{(1)}_{AFh}) \bigr) \simeq \Z\bigl[\frac{1}{2}\bigr] \oplus \Z^8, \qquad K_1\bigl( C^\ast(G^{(1)}_{AFh}) \bigr) \simeq  0.
\end{equation}

Generators can be chosen as follows:
\begin{itemize}
\item $\Z[1/2]$ : \(\ind_{a} + \ind_{b} + \ind_{c} + \ind_{d} +\ind_e = (1+\rho) \bigl( \ind_{a^0} + \ind_{b^0} + \ind_{c^0} + \ind_{d^0} +\ind_{e^0}\bigr)\), which is the constant function equal to $1$ on $\Xi_h^{(1)}$.
\item $\Z^8$ : \(\ind_{a^0}, \ind_{b^0}, \ind_{d^0}, \ind_{e^0}\) and  \( \rho \ind_{a^0}, \rho \ind_{b^0}, \rho \ind_{d^0}, \rho \ind_{e^0}\).
\end{itemize}
Notice for further purpose that \( (1-\rho) \bigl( \ind_{a^0} + \ind_{b^0} + \ind_{c^0} + \ind_{d^0} +\ind_{e^0}\bigr)\) lies in the kernel of the transpose of the matrix. 

\medskip
The groupoid $G^{(1)}_{AFv}$ is the $AF$-groupoid of the $1$-dimensional substitution induced on vertical edges.
Its $K_0$-group is the dimension group of this induced $1$-dimensional substitution.
These two groupoids are isomorphic, via rotation by $\pi/2$.
Hence we have the same $K$-theory groups as for $G^{(1)}_{AFh}$ in equation~\eqref{K-G1AFh}:

\begin{equation}
\label{K-G1AFv}
K_0\bigl( C^\ast(G^{(1)}_{AFv}) \bigr) \simeq \Z\bigl[\frac{1}{2}\bigr] \oplus \Z^8, \qquad K_1\bigl( C^\ast(G^{(1)}_{AFv}) \bigr) \simeq  0.
\end{equation}

\subsubsection*{$K$-theory of $C^\ast ( G^{(0)}_{AF} )$}
As there are five fixed points of the substitution, we have 

\begin{equation}
\label{K-G0AF}
K_0\bigl( C^\ast(G^{(0)}_{AF} ) \bigr) \simeq \Z^5,\qquad K_1\bigl( C^\ast(G^{(0)}_{AF} ) \bigr) \simeq  0.
\end{equation}

Now the $\Z^5$ summand is generated by
\[
\Z^5 = \langle \ind_0, \ind_1, \cdots \ind_4 \rangle \simeq \langle \ind_0 + \ind_1 \cdots + \ind_4, \ind_1,\ind_2,\ind_3,\ind_4\rangle
\]
where $\ind_i$ is the constant projection on the vertex-star $v_i$.

\subsection{The maps between the $K$-groups}
\label{ssec-maps}

\subsubsection*{Method} Computing the map $\beta_0$ is done by using the formula after Theorem~\ref{thm-ESPut}, which in turn is done by careful analysis of the adjacency relation between tiles and edges, or vertices and edges.
Eventually, we will need to compute the cokernel of $\beta_0$, so we are interested in two pieces of information: the rank of the image of $\beta_0$, and whether the quotient has torsion. The first point is easy, as the generators that we get for the image of $\beta_0$ are essentially vectors in $\Z^{24}$ (or $\Z^{10}$), and the rank can be obtained by row reduction of the appropriate matrices.
Whether the quotient has torsion is done by computing the Smith normal form of an appropriate matrix.

\subsubsection*{Image of \(\beta_0^{(1)}: K_0\bigl( C^\ast(G^{(1)}_{AFv}) \bigr) \rightarrow K_0\bigl( C^\ast(G^{(2)}_{AF}) \bigr)\).} 
The edges pictured in Figure~\ref{fig-edges} are horizontal. We let $r \ind_{a^0}$, etc.\@ refer to the indicator functions of the vertical edges.
We have

\begin{itemize}
    \item \(\beta_0^{(1)} ( r \ind_{a^0} ) = (\ind_{{A^0}} + \ind_{B^0}) - r (\ind_{{C_1^0}} + \ind_{D_1^0}) \),
    \item \(\beta_0^{(1)} ( r \ind_{b^0} ) = (\ind_{{C_2^0}} + \ind_{D_2^0}) - r (\ind_{{A^0}} + \ind_{B^0}) \),
    \item \(\beta_0^{(1)} ( r \ind_{c^0} ) = (r^2 - r^3) (\ind_{{B^0}} + \ind_{D_1^0} + \ind_{D_2^0}) \),
    \item \(\beta_0^{(1)} ( r \ind_{d^0} ) = r^2 (\ind_{{A^0}} + \ind_{C_1^0} + \ind_{C_2^0}) - r (\ind_{{C_2^0}} + \ind_{D_2^0}) \),
    \item \(\beta_0^{(1)} ( r \ind_{e^0} ) = (\ind_{{C_1^0}} + \ind_{D_1^0}) - r^3 (\ind_{{A^0}} + \ind_{C^0_1} + \ind_{C^0_2}) \).
\end{itemize}

All of these elements belong to the eigenspace of the transpose of the substitution matrix associated with eigenvalue~$1$, except $\beta_0^{(1)}(r \ind_{c^0})$ which has a component in this eigenspace and a component in the kernel. A computation shows that $\beta_0^{(1)} (r \ind_{c^0})$ is equivalent, in $K$-theory to $(r-1)(\ind_{A^0} + \ind_{B^0} + \ind_{C_1^0} + \ind_{C_2^0} + \ind_{D_1^0} + \ind_{D_2^0})  + r^2(r-1) (\ind_{A^0} + \ind_{C_1^0} + \ind_{C_2^0})$, which is in the eigenspace associated with eigenvalue~$1$.

Notice that we have 
\[
\beta_0^{(1)} \rho =-r^2 \beta_0^{(1)}
\]
(indeed, $\rho$ is a rotation by angle $\pi$, so it acts like $r^2$; however, it also reverses top and bottom, hence the minus sign),
so we get for instance
\( \beta_0^{(1)} (r \ind_a) = \beta_0^{(1)} ((1+\rho) r \ind_{a^0})
                            = (1-r^2) \beta_0^{(1)} (r \ind_{a^0})
                            = (1-r^2)(\ind_{A^0} + \ind_{B^0}) - (r-r^3) (\ind_{C_1^0} + \ind_{D_1^0}) \).
We check by hand that $\beta_0^{(1)} (r(\ind_{a^0} + \ind_{b^0} + \ind_{c^0} + \ind_{d^0})) = 0$. In particular, the image under $\beta_0^1$ of the constant function equal to $1$ is $0$.

This linear relation (as well as the one obtained by multiplying by $\rho$) are the only ones. The range of $\beta_0^{(1)}$ is spanned (over $\Z$) by the images of $r \ind_{a^0}$, $r \ind_{b^0}$, $r \ind_{d^0}$, $r \ind_{e^0}$, $r \ind_{a^1}$, $r \ind_{b^1}$, $r \ind_{d^1}$, $r \ind_{e^1}$.
Therefore the image of $\beta_0^{(1)}$ is a subgroup of rank $8$ inside the $\Z^{12}$-summand of $K_0\bigl( C^\ast(G^{(2)}_{AF}) \bigr)$.
Assisted by a computer, we check that there exists a $\Z$-basis of $K_0\bigl( C^\ast(G^{(2)}_{AF}) \bigr)$ which contains $8$ elements in the range of $\beta_0^{(1)}$. In particular, the quotient is torsion-free.

\subsubsection*{Image of \(\beta_0^{(0)}: K_0\bigl( C^\ast(G^{(0)}_{AF}) \bigr) \rightarrow K_0\bigl( C^\ast(G^{(1)}_{AFh}) \bigr)\).} 
We have

\begin{itemize}
    \item \(\beta_0^{(0)}(\ind_0) = (\rho -1) \ind_{c^0}\), 
    \item \(\beta_0^{(0)}(\ind_1) = -\rho \beta_0^{(0)}(\ind_3) = \rho  \ind_{e^0} - \ind_{b^0}\), 
    \item \(\beta_0^{(0)}(\ind_2) = -\rho \beta_0^{(0)}(\ind_4) = \rho  \ind_{a^0} - \ind_{d^0}\).
\end{itemize}    
So we see that $\ind_0 + \ldots \ind_4$ is mapped to \((1-\rho)(\ind_{a^0}+ \ldots \ind_{e^0})\) which lies in the kernel of the transpose of the $1d$-substitution matrix.
Now the $\Z^8$ summand of \(K_0\bigl( C^\ast(G^{(1)}_{AFh})\) is generated by \(a^0,b^0,d^0,e^0,\rho a^0,\rho b^0,\rho d^0,\rho e^0\), or equivalently by \(a^0,b^0,d^0,e^0,\rho a^0 -d^0,\rho b^0-e^0,\rho d^0-a^0,\rho e^0-b^0\) and the last four vectors generate the image of $\beta_0^{(0)}$.
Hence, the image of the $\Z^8$ summand of $K_0\bigl( C^\ast(G^{(1)}_{AFh}) \bigr)$ in $\coker \beta_0^{(0)}$ is generated by \(a^0,b^0,d^0,e^0\).

\subsubsection*{Image of $\beta_0^{(2)}: K_0 \bigl( C^\ast (G^{(1)}_h) \bigr) \rightarrow K_0 \bigl( C^\ast (G_h^{dec}) \bigr)$.}
The computations are similar as for the image of $\beta_0^{(1)}$.
The images will naturally be elements of $K_0 \bigl( C^\ast (G^{(2)}_{AF}) \bigr)$, which map in $K_0 \bigl( C^\ast (G_h^{dec}) \bigr)$ (induced by the inclusion of the $C^\ast$-algebras).
The left-hand group is generated by \(a^0,b^0,d^0,e^0\).
We compute

\begin{itemize}
 \item $\beta_0^{(2)}(\ind_{a^0}) = - ( \ind_{C_1^0} + \ind_{D_1^0} ) + r^3( \ind_{A^0} + \ind_{B^0} )$,
 \item $\beta_0^{(2)}(\ind_{b^0}) = - ( \ind_{A^0} + \ind_{B^0} ) + r^3( \ind_{C_2^0} + \ind_{D_2^0} )$,
 \item $\beta_0^{(2)}(\ind_{d^0}) = - ( \ind_{C_2^0} + \ind_{D_2^0} ) + r( \ind_{A^0} \ind_{C_1^0} + \ind_{C_2^0} )$,
 \item $\beta_0^{(2)}(\ind_{e^0}) = - r^2 ( \ind_{A^0} + \ind_{C_1^0} + \ind_{C_2^0} ) + r^3 ( \ind_{C_1^0} + \ind_{D_1^0} )$.
\end{itemize}
We check that the images of these four elements in $K_0 (C^\ast (G^{(2)}_{AF}))$ lie in the eigenspace associated with eigenvalue $1$ of $B^T$.
They are also independent, and independent from the elements in the range of $\beta_0^{(1)}$.
Therefore, they are independent in $K_0 (C^\ast (G_h))$. Another computer-assisted check shows that the quotient is torsion-free.

\subsection{Computations of the $K$-groups}
\label{ssec-comp}

Let us now apply Putnam's exact sequence to the three disconnections decribed in Section~\ref{ssec-decres}, and use the three propositions we proved there.
Remember that the $K_1$-groups of $AF$-groupoids are trivial.
Putnam's exact sequence for the disconnection of $G$ along the horizontal boundary reads:

\begin{equation}
\label{eq-ES-d2}
\xymatrix{ 
K_0 \bigl( C^\ast(G_h^{res}) \bigr) \ar@{->}[r]^{\beta_0^{(2)}} & 
K_0 \bigl( C^\ast(G_h^{dec}) \bigr) \ar@{->}[r]^{\gamma_0^{(2)}} & 
K_0 \bigl( C^\ast(G) \bigr) \ar@{->}[d]^{\alpha_1^{(2)}} \\
K_1 \bigl( C^\ast(G) \bigr) \ar@{->}[u]^{\alpha_0^{(2)}} &  
K_1 \bigl(  C^\ast(G_h^{dec})\bigr)  \ar@{->}[l]^{\gamma_1^{(2)}} & 
K_1 \bigl( C^\ast(G_h^{res}) \bigr) \ar@{->}[l]^{\beta_1^{(2)}}
}
\end{equation}

Putnam's exact sequence for the disconnection of $G^{dec}_h$ along the vertical boundary reads (omitting the trivial groups):
\begin{equation}\label{eq-ES-d1}
\xymatrix{ 
 K_1 \bigl( C^\ast(G_h^{dec}) \bigr) \ar@{^{(}->}[r]^{\alpha_0^{(1)}} &
       K_0 \bigl( C^\ast(G^{(1)}_{AFv}) \bigr) \ar@{->}[r]^{\beta_0^{(1)}} &
       K_0 \bigl( C^\ast(G_{AF}^{(2)}) \bigr) \ar@{->>}[r]^{\gamma_0^{(1)}} &
       K_0 \bigl( C^\ast(G_h^{dec}) \bigr).
}
\end{equation}

And for the disconnection of $G^{res}_h$ along the vertical boundary, it reads:
\begin{equation}
\label{eq-ES-d0}
\xymatrix{ 
  K_1 \bigl( C^\ast(G^{res}_h) \bigr) \ar@{^{(}->}[r]^{\alpha_0^{(0)}}  &
  K_0 \bigl( C^\ast(G^{(0)}_{AF}) \bigr) \ar@{->}[r]^{\beta_0^{(0)}}  & 
  K_0 \bigl( C^\ast(G^{(1)}_{AFh}) \bigr) \ar@{->>}[r]^{\gamma_0^{(0)}}  & 
  K_0 \bigl( C^\ast(G^{res}_h) \bigr)
}
\end{equation}

We first study the exact sequences~\eqref{eq-ES-d1} and~\eqref{eq-ES-d0} to calculate the $K$-theory groups of $C^\ast(G_h^{dec})$ and $C^\ast(G^{res}_h)$ respectively.
We then substitute back into exact sequence~\eqref{eq-ES-d2} to compute the $K$-theory groups of $C^\ast(G)$.

\subsubsection*{$K$-theory groups of $C^\ast(G^{res}_h)$.}
We consider the exact sequence~\eqref{eq-ES-d0}.
By Corollary~\ref{prop-morita}, the groupoid $G^{res}_h$ is Morita equivalent to the groupoid of the $1$-dimensional substitution tiling of the horizontal edges.
Its $K_1$-goup is simply $\Z$, generated by the unitary of horizontal translation.
The $K$-groups of $G^{(0)}_{AF}$ and $G^{(1)}_{AFv}$ have been computed in Section~\ref{ssec-dimgp}, equations~\eqref{K-G0AF} and~\eqref{K-G1AFh}.
The exact sequence \eqref{eq-ES-d0} then reads:

\begin{equation}
\label{eq-ES-d0-explicit}
\xymatrix{ 
 \Z \ \ar@{^{(}->}[r]^{\alpha_0^{(0)}} &
 \Z^5 \ar@{->}[r]^(.37){\beta_0^{(0)}} &
 \Z\bigl[\frac{1}{2}\bigr] \oplus \Z^8\ar@{->>}[r]^(.42){\gamma_0^{(0)}} &
 K_0 \bigl( C^\ast(G^{res}_h) \bigr).
}
\end{equation}

As computed in Sections~\ref{ssec-dimgp} and~\ref{ssec-maps}, the generator of $K_1\bigl( C^\ast(G^{res}_h) \bigr) = \Z$ is sent to the constant projection $\ind_0 + \ind_1 \cdots + \ind_4\in \Z^5$, and $\coker \beta_0^{(0)}$ is torsion-free. 
Hence we have:

\begin{equation}
\label{K-H}
K_0\bigl( C^\ast(G^{res}_h ) \bigr) \simeq \Z\bigl[\frac{1}{2}\bigr] \oplus \Z^4, \qquad K_1\bigl( C^\ast(G^{res}_h) \bigr) \simeq  \Z.
\end{equation}

\subsubsection*{$K$-theory groups of $C^\ast(G_h^{dec})$.}
We consider the exact sequence~\eqref{eq-ES-d1}.
We computed in Section~\ref{ssec-dimgp} the $K$-theory groups of $G^{(1)}_{AFv}$ 
in equation~\eqref{K-G1AFv}, and the $K$-theory groups of  $G^{(2)}_{AF}$, in equation~\eqref{K-Gaf}.
Substituting these into the exact sequence~\eqref{eq-ES-d1} we get:
\[
\xymatrix{ 
 K_1 \bigl( C^\ast(G_h^{dec}) \bigr) \ar@{^{(}->}[r]^(.55){\alpha_0^{(1)}} & 
 \Z\bigl[\frac{1}{2}\bigr] \oplus \Z^8  \ar@{->}[r]^(.4){\beta_0^{(1)}}   &
 \Z\bigl[\frac{1}{4}\bigr] \oplus \Z\bigl[\frac{1}{2}\bigr]^2 \oplus \Z^{12} \ar@{->>}[r]^(.54){\gamma_0^{(1)}} &
 K_0 \bigl( C^\ast(G_h^{dec}) \bigr).
}
\]

The element $1$ in $\Z[1/2]$ in $K_0\bigl( C^\ast(G^{(1)}_{AFv}) \bigr)$ is the constant projection equal to one on all vertical edges, which is mapped to $0$ by $\beta_0^{(1)}$.
We computed in Section~\ref{ssec-maps} that the range of $\beta^{(1)}_0$ is free Abelian of rank $8$, and its cokernel is torsion-free.
It follows that 

\begin{equation}
\label{K-Gh}
K_0\bigl( C^\ast(G_h^{dec} ) \bigr) \simeq \Z\bigl[\frac{1}{4}\bigr] \oplus \Z\bigl[\frac{1}{2}\bigr]^2 \oplus \Z^4, \qquad K_1\bigl( C^\ast(G_h^{dec}) \bigr) \simeq  \Z\bigl[\frac{1}{2}\bigr].
\end{equation}

\subsubsection*{$K$-theory groups of $C^\ast(G)$.}
We substitute equations~\eqref{K-H} and~\eqref{K-Gh} into the exact sequence~\eqref{eq-ES-d2} to get
\[
\xymatrix{ 
\Z\bigl[\frac{1}{2}\bigr] \oplus \Z^4 \ar@{->}[r]^{\beta_0^{(2)}} & 
\Z\bigl[\frac{1}{4}\bigr] \oplus \Z\bigl[\frac{1}{2}\bigr]^2 \oplus \Z^4 \ar@{->}[r]^{\gamma_0^{(2)}} & 
K_0 \bigl( C^\ast(G) \bigr) \ar@{->}[d]^{\alpha_1^{(2)}} \\
K_1 \bigl( C^\ast(G) \bigr) \ar@{->}[u]^{\alpha_0^{(2)}} &  
\Z\bigl[\frac{1}{2}\bigr] \ar@{->}[l]^{\gamma_1^{(2)}}  & 
\Z \ar@{->}[l]^{\beta_1^{(2)}=0}
}
\]

We have the following properties
\begin{enumerate}
\item {\(\beta_0^{(2)} \Bigl( \Z \bigl[ \frac{1}{2}\bigr] \Bigr) =0\).} As above $\Z[1/2]$ is generated by the indicator function equal to $1$ on all horizontal edges, and is mapped to $0$ by $\beta_0$ as seen in Section~\ref{ssec-maps}.
\item \(\beta_0^{(2)} \bigl( \Z^4 \bigr)\) is identified with the \(\Z^4\) summand. This is the computation done in Section~\ref{ssec-maps}.
\item The generator of $\Z$ in $K_1(C^*(G^{(1)}_h))$ is the unitary of horizontal translation, and is mapped to $0$ by $\beta_1^{(2)}$. See Section~\ref{sec-PutES} after Theorem~\ref{thm-ESPut}.
\item The exact sequence splits in $\alpha_1^{(2)}$. Simply from the fact that $\Z$ is free.
\item From the splitting in $\alpha_1^{(2)}$ and what we established before, there is a short exact sequence
\[
 0 \rightarrow \Z[1/2] \rightarrow K_1 \bigl( C^\ast(G) \bigr) \rightarrow \Ker (\beta_0^{(2)}) \rightarrow 0,
\]
and the right-hand group is $\Z[1/2]$ by Section~\ref{ssec-maps}. A (left) splitting can be written by remarking that the unitary implementing horizontal translations $u_{(1,0)}$ is both an element of $C^\ast(G)$ and $C^\ast(G^{dec}_h)$. The map sending $u_{(1,0)}$ to itself produces a section of $\gamma_1^{(2)}$.
\end{enumerate}
It follows from 1, 2, 3, and 4, that

\begin{displaymath}
K_0 \bigl( C^\ast(G) \bigr)  \simeq \Z \bigl[ \frac{1}{4}\bigr] \oplus \Z \bigl[ \frac{1}{2}\bigr]^2 \oplus \Z,
\end{displaymath}
and from 1, 2, 3, and 5, that

\begin{displaymath}
K_1 \bigl( C^\ast(G) \bigr)  \simeq \Z \bigl[ \frac{1}{2}\bigr]^2.
\end{displaymath}

\subsubsection*{Acknowledgments}

We wish to thank Toke Carlsen and Ian Putnam for helpful discussions. An important part of this research was carried out in the CIRM (\emph{Centre international de rencontres math\'ematiques}) during a ``research in pairs'' in 2014.

\bibliographystyle{abbrv}
\bibliography{biblio-chair}

\newpage

\appendix

\section{Computer use}
Parts of the computations were done using a computer algebra software. We provide here some of the matrices and explain the procedures used.
The computations should be easy to reproduce using any software able to compute eigenvalues, eigenvectors and the Schmidt normal form of integer matrices. They did not require any specialized library.

Note that there are probably more clever ways to approach these computations, using the action of $\Z/4\Z$ by rotation on the tiles. However, since computer assistance was needed anyway, the greedy approach was chosen.

First, define the matrix of the substitution on tiles.
This matrix $B^{(2)}$ is given as follows. The basis for $\Z^{24}$ is indexed by the six tiles of Figure~\ref{fig-tiles}, then their images by $r$, by $r^2$ etc.
\[
{\scriptsize
\left[
\begin{array}{cccccccccccccccccccccccc}
   2&1&1&1&0&0&0&0&0&0&0&0&0&0&0&0&0&0&0&0&0&0&0&0 \\
   0&1&0&0&1&1&0&0&0&0&0&0&0&0&0&0&0&0&0&0&0&0&0&0 \\
   0&0&1&0&1&0&0&0&0&0&0&0&0&0&0&0&0&0&0&0&0&0&0&0 \\
   0&0&0&1&0&1&0&0&0&0&0&0&0&0&0&0&0&0&0&0&0&0&0&0 \\
   0&0&0&0&0&0&0&0&0&0&0&0&0&0&0&0&0&0&1&1&1&1&1&1 \\
   0&0&0&0&0&0&1&1&1&1&1&1&0&0&0&0&0&0&0&0&0&0&0&0 \\
   0&0&0&0&0&0&2&1&1&1&0&0&0&0&0&0&0&0&0&0&0&0&0&0 \\
   0&0&0&0&0&0&0&1&0&0&1&1&0&0&0&0&0&0&0&0&0&0&0&0 \\
   0&0&0&0&0&0&0&0&1&0&1&0&0&0&0&0&0&0&0&0&0&0&0&0 \\
   0&0&0&0&0&0&0&0&0&1&0&1&0&0&0&0&0&0&0&0&0&0&0&0 \\
   1&1&1&1&1&1&0&0&0&0&0&0&0&0&0&0&0&0&0&0&0&0&0&0 \\
   0&0&0&0&0&0&0&0&0&0&0&0&1&1&1&1&1&1&0&0&0&0&0&0 \\
   0&0&0&0&0&0&0&0&0&0&0&0&2&1&1&1&0&0&0&0&0&0&0&0 \\
   0&0&0&0&0&0&0&0&0&0&0&0&0&1&0&0&1&1&0&0&0&0&0&0 \\
   0&0&0&0&0&0&0&0&0&0&0&0&0&0&1&0&1&0&0&0&0&0&0&0 \\
   0&0&0&0&0&0&0&0&0&0&0&0&0&0&0&1&0&1&0&0&0&0&0&0 \\
   0&0&0&0&0&0&1&1&1&1&1&1&0&0&0&0&0&0&0&0&0&0&0&0 \\
   0&0&0&0&0&0&0&0&0&0&0&0&0&0&0&0&0&0&1&1&1&1&1&1 \\
   0&0&0&0&0&0&0&0&0&0&0&0&0&0&0&0&0&0&2&1&1&1&0&0 \\
   0&0&0&0&0&0&0&0&0&0&0&0&0&0&0&0&0&0&0&1&0&0&1&1 \\
   0&0&0&0&0&0&0&0&0&0&0&0&0&0&0&0&0&0&0&0&1&0&1&0 \\
   0&0&0&0&0&0&0&0&0&0&0&0&0&0&0&0&0&0&0&0&0&1&0&1 \\
   0&0&0&0&0&0&0&0&0&0&0&0&1&1&1&1&1&1&0&0&0&0&0&0 \\
   1&1&1&1&1&1&0&0&0&0&0&0&0&0&0&0&0&0&0&0&0&0&0&0
  \end{array}\right]}.
\]
One checks that the eigenvalues and eigenvectors of $\big(B^{(2)}\big)^T$ are $4$, $2$, $1$ and $0$ with respective algebraic multiplicities $1$, $2$, $12$ and $9$. The geometric multiplicities are identical, except for the eigenvalue $0$, for which is just $8$.
A software can compute the eigenvectors of $B^{(2)}$, or check by a matrix-vector multiplication that the ones given in Section~\ref{sec-Kth} are indeed eigenvectors (for example, in this basis, $(r^2-1) \bigl(\ind_{A^0} + \ind_{B^0} + \ind_{C_1^0} + \ind_{C_2^0} + \ind_{D_1^0} + \ind_{D_2^0} \bigr)$ is the transpose of the row vector 
\[
[-1,-1,-1,-1,-1,-1,0,0,0,0,0,0,1,1,1,1,1,1,0,0,0,0,0,0],
\]
and is an eigenvector for the eigenvalue~$2$).

Similarly, the matrix for the substitution induced on the horizontal edges is
\[
 B^{(1)}_h = \begin{bmatrix}
   1 & 0 & 0 & 0 & 0 & 0 & 0 & 0 & 0 & 0 \\
   0 & 1 & 0 & 0 & 0 & 0 & 0 & 0 & 0 & 0 \\
   0 & 0 & 1 & 1 & 1 & 1 & 0 & 0 & 1 & 1 \\
   1 & 1 & 1 & 1 & 0 & 0 & 1 & 1 & 0 & 0 \\
   0 & 0 & 0 & 0 & 1 & 0 & 0 & 0 & 0 & 0 \\
   0 & 0 & 0 & 0 & 0 & 1 & 0 & 0 & 0 & 0 \\
   0 & 0 & 0 & 0 & 0 & 0 & 1 & 0 & 0 & 0 \\
   0 & 0 & 0 & 0 & 0 & 0 & 0 & 1 & 0 & 0 \\
   0 & 0 & 0 & 0 & 0 & 0 & 0 & 0 & 1 & 0 \\
   0 & 0 & 0 & 0 & 0 & 0 & 0 & 0 & 0 & 1 
 \end{bmatrix}
\]
in the basis indexed by the edges in order: $1a$, $1b$, etc. (See Figure~\ref{fig-edges}.) The eigenvalues are $2$, $1$ and $0$ with multiplicities $1$, $8$ and $1$ and the eigenvectors are the one given in Section~\ref{sec-Kth}.
For the substitution induced on the vertical edges, the matrix $B^{(1)}_v$ is identical in an appropriate basis (by convention we chose the basis indexed by the edges rotated by $\pi/4$).
The eigenvalues associates to eigenvalues $1$ are the ones corresponding to the edges $1a$, $1b$, $4a$, $4b$, $5$, $6$, $7$ and $8$, or taking advantage of the symmetry, they are the ones associated with the edges $a:=1a$, $b:=1b$, $d:=5$, $e:=6$ and their image under the symmetry $\rho$.

The computation of the eigenvalues of the transposes of these matrices is enough to give the $K$-theory of the $AF$-algebras.

Next, one computes the induced maps between the $K$-groups. For computing the image of $\beta_0^{(1)}$, it is enough to consider images of elements represented by vectors in the eventual range of $(B^{(1)})^T$ in $\Z^{10}$, \emph{i.e.} eigenvectors associated with eigenvalues $1$ and $2$.
We compute by hand that, in these bases, $\beta_0^{(1)}$ is given by a map $\Z^{10} \rightarrow \Z^{24}$ as follows
\begin{align*}
 \ind_{r.a^0} & \mapsto [1, 1, 0, 0, 0, 0, 0, 0, -1, 0, -1, 0, 0, 0, 0, 0, 0, 0, 0, 0, 0, 0, 0, 0]^T  \\
 \ind_{r.b^0} & \mapsto [0, 0, 0, 1, 0, 1, -1, -1, 0, 0, 0, 0, 0, 0, 0, 0, 0, 0, 0, 0, 0, 0, 0, 0]^T \\
 \ind_{r.d^0} & \mapsto [0, 0, 0, 0, 0, 0, 0, 0, 0, -1, 0, -1, 1, 0, 1, 1, 0, 0, 0, 0, 0, 0, 0, 0]^T \\
 \ind_{r.e^0} & \mapsto [0, 0, 1, 0, 1, 0, 0, 0, 0, 0, 0, 0, 0, 0, 0, 0, 0, 0, -1, 0, -1, -1, 0, 0]^T.
\end{align*}
Taking advantage of the equality $\beta \rho = -r^2 \beta$, one can obtain the image of these vectors with the opposite orientation. In the end, the map induced by $\beta^{(1)}_0$ on the eigenspace associated with $1$ is given by the following matrix $\Z^8 \rightarrow \Z^{24}$, where the basis for $\Z^8$ is indexed by $a^0, \rho a^0, b^0, \rho b^0, d^0, \ldots$ and the basis for $\Z^{24}$ is the same as above.
\[
\begin{footnotesize}
 \left[ \begin {array}{cccccccc}
  1&0&0&0&0&-1&0&0  \\
  1&0&0&0&0&0&0&0   \\ 
  0&0&0&0&0&-1&1&0  \\
  0&0&1&0&0&-1&0&0  \\
  0&0&0&0&0&0&1&0   \\
  0&0&1&0&0&0&0&0   \\
  0&0&-1&0&0&0&0&1  \\
  0&0&-1&0&0&0&0&0  \\
  -1&0&0&0&0&0&0&1  \\
  0&0&0&0&-1&0&0&1  \\
  -1&0&0&0&0&0&0&0  \\
  0&0&0&0&-1&0&0&0  \\
  0&-1&0&0&1&0&0&0  \\
  0&-1&0&0&0&0&0&0  \\
  0&0&0&0&1&0&0&-1  \\
  0&0&0&-1&1&0&0&0  \\
  0&0&0&0&0&0&0&-1  \\
  0&0&0&-1&0&0&0&0  \\
  0&0&0&1&0&0&-1&0  \\
  0&0&0&1&0&0&0&0   \\
  0&1&0&0&0&0&-1&0  \\
  0&0&0&0&0&1&-1&0  \\
  0&1&0&0&0&0&0&0   \\
  0&0&0&0&0&1&0&0
\end{array} \right].
\end{footnotesize}
\]
One checks by matrix-vector multiplication that all the columns of this matrix lie in the eigenspace of $(B^{(2)})^T$ associated with eigenvalue~$1$. Therefore, each of these images survives in the direct limit.
In addition, this matrix has a trivial kernel, which shows that the images not only survive in the limit, but they are linearly independent. 

One\footnote{A computer.} computes that image under $\beta$ of the eigenvector of $B^{(1)}$ associated with $2$ is $0$, and the eigenvector associated with $0$ is sent in the nullspace of $B^{(2)}$ (so that it is sent to $0$ in the limit).
Putting all this together, $\beta$ is determined by a map $\Z^{10} \rightarrow \Z^{24}$. There is a splitting (according to the invariant subspaces of the matrices):
\[
 \Z \oplus \Z \oplus \Z^8 \longrightarrow E_4 \oplus E_2 \oplus E_1 \oplus E_0,
\]
where the first two factors are sent to $E_0$ and $\Z^8$ is sent into $E_1$.
Now, a software can compute that the Schmidt normal form of the matrix above only has $1$ on the diagonal. Because of that, there is a basis of $E_1$ which splits it into $E_1 \simeq \Z^8 \oplus \Z^4$, such that the $\Z^8$ factor on the left is sent bijectively onto the $\Z^8$ factor on the right.
It shows that on the level of the $K$-groups, the cokernel of $\beta_0^{(1)}$ is torsion-free.

The computation for $\beta_0^{(2)}$ is essentially a repetition, taking into account the fact that elements in $K_0(C^*(G_h^{(1)}))$ can be represented by projections in the $AF$-subalgebra (since this group is a quotient), and the image in $K_0 (C^* (G^{dec}_h))$ can be computed by first computing the image in $K_0(C^*(G_AF^{(2)}))$ and doing then the appropriate identifications.

\end{document}